\newtheoremstyle{theorem}{6pt}{6pt}{\itshape}{}{\bfseries}{.}{.5em}{}
\newtheoremstyle{definition}{6pt}{6pt}{\upshape}{}{\bfseries}{.}{.5em}{}
\theoremstyle{theorem}
\newtheorem{theorem}{Theorem}[section]
\newaliascnt{corollary}{theorem}
\newaliascnt{lemma}{theorem}
\newtheorem{lemma}[lemma]{Lemma}
\newaliascnt{sublemma}{theorem}
\theoremstyle{definition}
\newaliascnt{proposition}{theorem}
\newcommand{\R}{{\mathbb{R}}}
\newcommand{\D}{{\nabla}}
\newcommand{\dif}{{\mathrm d}}
\numberwithin{equation}{section}
\begin{document}

\begin{frontmatter}

\title{Global well-posedness and large-time behavior for three-dimensional magneto-micropolar equations with horizontal dissipation}

\author[label1]{Peng Lu}
\address[label1]{Department of Mathematics, Zhejiang Sci-Tech University, Hangzhou, Zhejiang 310018, P.R.China;}
\ead{plu25@zstu.edu.cn}

\author[label2]{Yuanyuan Qiao\corref{cor1}}
\address[label2]{School of Mathematics and Information Science, Henan Polytechnic University, Jiaozuo, Henan 454000, P.R.China;}
\cortext[cor1]{Corresponding author.}
\ead{yyqiao@hpu.edu.cn}

\begin{abstract}
This paper is concerned with the stability and large-time behavior for 3D magneto-micropolar equations with horizontal dissipation. The global well-posedness of the aforementioned system is established, with the initial data and its vertical derivatives required to be sufficiently small in $L^2$ space. Moreover, we obtain the optimal decay rates for the $H^1$-norm of the solution. The proofs of our main results rely on the special structure of the equations and anisotropic Sobolev-type inequalities.
\end{abstract}

\begin{keyword}
Magneto-micropolar equations; horizontal dissipation; global well-posedness; sharp decay rate.
\end{keyword}

\end{frontmatter}

\section{Introduction}
The magneto-micropolar equations were introduced in \cite{MR443550} to describe the motion of an incompressible, electrically conducting micropolar fluids in the presence of an arbitrary magnetic field, such as salt water, ester, fluorocarbon, etc., which is of great importance in practical and mathematical applications. The three-dimensional viscous and resistive magneto-micropolar system is written as
\begin{equation}\label{0.0}
\begin{cases}
u_t+(u\cdot\D)u-\mu\Delta u-\chi\Delta u+\D P=(B\cdot\D)B+2\chi\D\times w, \\
B_t+(u\cdot\D)B-\nu\Delta B=(B\cdot\D)u, \\
w_t+(u\cdot\D)w-\gamma\Delta w-\kappa\nabla(\nabla\cdot w)+4\chi w=2\chi\D\times u, \\
\D\cdot u=\D\cdot B=0,
\end{cases}
\end{equation}
where $u=(u_1, u_2, u_3)$ is the velocity, $B=(B_1, B_2, B_3)$ is the magnetic field, $w=(w_1, w_2, w_3)$ is the micro-rotational velocity, and $P$ is the pressure. $\mu$, $\chi$ and $\nu$ are positive constants, representing the kinematic viscosity, the vortex viscosity, and the magnetic diffusivity. The positive constants $\gamma$ and $\kappa$ are the spin viscosity.

As is pointed out in \cite{MR2300252, MR2842964}, in geophysical fluids, meteorologist often modelize turbulent diffusion by putting a viscosity of the form $\mu_h(\partial_1^2+\partial_2^2)+\mu_3\partial_3^2$ instead of the classical viscosity $\mu\Delta$, where $\partial_j=\frac{\partial}{\partial x_j}$ for $j=1,2,3$, and $\mu_3$ is usually much smaller than $\mu_h$. We refer to J. Pedlovsky \cite[Chapter 4]{PJ} for a more complete discussion. In this paper, we focus on the following Cauchy problem of magneto-micropolar equations with horizontal dissipation.
\begin{equation}\label{BMHD}
\begin{cases}
u_t+(u\cdot\D)u-\mu\Delta_h u-\chi\Delta u+\D P=(B\cdot\D)B+2\chi\D\times w, \\
B_t+(u\cdot\D)B-\nu\Delta_h B=(B\cdot\D)u, \\
w_t+(u\cdot\D)w-\gamma\Delta_h w-\kappa\nabla(\nabla\cdot w)+4\chi w=2\chi\D\times u, \\
\D\cdot u=\D\cdot B=0,
\end{cases}
\end{equation}
where $x=(x_1,x_2, x_3)\in\R^3$ and $t\geq 0$ denote the spatial coordinate and time coordinate, and $\Delta_h=\partial_1^2+\partial_2^2$ denotes the horizontal Laplacian. The initial data is given by
\begin{equation}\label{data}
(u, B, w)(0,x)=(u_0, B_0, w_0)(x),
\end{equation}
satisfying $\D\cdot u_0=0$ and $\D\cdot B_0=0$ in the sense of distribution.

The well-posedness problem and large time behavior for the three-dimensional magneto-micropolar equations have attracted considerable attention in recent decades. The local existence and uniqueness of strong solutions, the global existence of strong solutions for small initial data, and the global existence of weak solutions were established in \cite{MR1810322, MR1484679, MR1666509}, respectively. In 2017, Wang and Wang \cite{MR3543126} considered the equations with mixed partial viscosity, and established the global existence of smooth solutions with initial data sufficiently small in $H^1$. In 2018, Li and Shang \cite{MR3825173} established the optimal decay rates of global weak and smooth solutions to \eqref{0.0} with small data. In 2019, Tan, Wu and Zhou \cite{MR3912713} established the global well-posedness of solutions in $H^s$ for $s\geq 3$, and obtained the decay rate of the solutions with initial data belongs to $\dot{H}^{-s}$ or $\dot{B}^{-s}_{2, \infty}$ with $0<s<\frac{3}{2}$. In 2021, Wang and Li \cite{MR4198768} proved the global well-posedness of the solutions in $H^3$ with mixed partial viscosity near a background magnetic field, where the authors only need one directional viscosity in the equations of $B$. In 2022, Jia, Xie and Dong \cite{MR4346494} established the global existence of smooth solutions to \eqref{0.0} with fractional dissipation $(-\Delta)^\alpha$. Recently, Shang and Liu \cite{MR4848928} established the global well-posedness and optimal decay estimates for smooth solutions in Besov spaces. For more related results, we refer to \cite{MR4915057, MR4723823, MR4119558, MR4717812, MR4204713, MR4904435, MR4853431, MR4653394, MR4108623} and the references cited therein.

When the magnetic field $B=0$, \eqref{0.0} reduces to the three-dimensional micropolar equations, which describe physical phenomena such as the motion of animal blood, liquid crystals, and dilute aqueous polymer solutions. The micropolar fluid model was first proposed by Eringen \cite{MR204005}. The existence of weak and strong solutions was investigated by Galdi and Rionero \cite{MR467030} and Yamaguchi \cite{MR2158216}, respectively. In recent years, several important results have been obtained regarding the stability and long-time behavior of solutions to the micropolar equations. In 2012, Chen and Miao \cite{MR2860636} proved the global well-posedness of the solutions in critical Besov spaces. In 2020, Wang, Wu and Ye \cite{MR4085363} established the global well-posedness of the smooth solutions to the equations with fractional dissipation. In 2021, Remond-Tiedrez and Tice \cite{MR4218675} studied the equations with anisotropic microstructure in a periodic domain, and proved the nonlinearly instability of a nontrivial equilibrium. In 2022, Li and Xiao \cite{MR4357379} studied the large-time behavior of the equations with a nonlinear velocity damping term $|u|^{\beta-1}u$ for $\beta>\frac{14}{5}$. Ye, Wang and Jia \cite{MR4379344} proved the global well-posedness and decay rates for the equations with only velocity dissipation. In 2024, Song \cite{MR4748113} proved the global well-posedness and decay rates of the solutions in critical Besov spaces. For more results in two-dimensional spaces, we refer to \cite{ MR3592648, MR3814368, MR2644133, MR4483343, MR3694625} and the references therein.

Recently, Shang and Liu \cite{MR4850530} proved the global well-posedness and optimal time decay of solutions to 3D micropolar equations with horizontal dissipation. They imposed a smallness assumption solely on the $L^2$-norm of the initial data and its $x_3$-directional derivative, which in turn effectively compensates for the lack of dissipation along the $x_3$-direction. Inspired by \cite{MR4850530}, we consider the global well-posedness and large-time behavior of three-dimensional magneto-micropolar equations with horizontal dissipation. Throughout this paper, for convenience, we denote
$$\int f\dif x=\int_{\mathbb{R}^3}f\dif x, \quad \Lambda^sf=\mathcal{F}^{-1}\left(|\xi|^s\hat{f}(\xi)\right), \quad \Lambda_h^sf=\mathcal{F}^{-1}\left(|\xi_h|^s\hat{f}(\xi)\right),$$
for $s\in\mathbb{R}$ and any function $f$, where $\mathcal{F}(f)$ or $\hat{f}$ denote the Fourier transformation of $f$. Moreover, we denote
\begin{align*}
& L^p=L^p(\mathbb{R}^3), ~ W^{k,p}=W^{k,p}(\mathbb{R}^3), ~ H^k=W^{k,2}, \\
& \dot{H}^{-\sigma}_h=\left\{f\in\mathcal{S}'(\mathbb{R}^3) ~ \big| ~ \Lambda_h^{-\sigma}f\in L^2\right\}, \quad\text{with} \quad \|f\|_{\dot{H}^{-\sigma}_h}=\|\Lambda_h^{-\sigma}f\|_{L^2}.
\end{align*}
We also write $\|f\|_{L^q_{x_j}}$ with $j =1,2,3$ for the $L^q$-norm with respect to $x_j$ on $\mathbb{R}$, and $\|f\|_{L^q_h}$ for the $L^q$-norm with respect to $(x_1, x_2)$ on $\mathbb{R}^2$.

Our main results can be stated as follows.
\begin{theorem}\label{th}
Suppose that $(u_0, B_0, w_0)\in H^1$ with $\D\cdot u_0=0$ and $\D\cdot B_0=0$. Then there exists a positive constant $\varepsilon>0$, such that if
\begin{equation}\label{th.1}
\|(u_0, B_0, w_0)\|_{L^2}+\|(\partial_3u_0, \partial_3B_0, \partial_3w_0)\|_{L^2}\leq\varepsilon,
\end{equation}
then \eqref{BMHD} admits a unique global solution $(u, B, w)$ satisfying
\begin{equation}\label{th.2}
\sup\limits_{t\geq 0}\Big(\|(u, B, w)\|_{L^2}+\|(\partial_3u, \partial_3B, \partial_3w )\|_{L^2}\Big)\leq C\varepsilon,
\end{equation}
and
\begin{equation}\label{th.3}
\sup\limits_{t\geq 0}\|(u, B, w)\|_{H^1}^2+\int_0^\infty\|(\nabla_hu, \nabla_hB, \nabla_hw)\|_{H^1}^2\dif t\leq C\|(u_0, B_0, w_0)\|_{H^1}^2.
\end{equation}

Moreover, if $(u_0, B_0, w_0)\in H^k$ for some $k\geq 2$, then the above global solution 
$(u, B, w)$ satisfies
\begin{align}\label{th.4}
&\|(u, B, w)(t)\|_{H^k}^2+\int_0^t\|(\nabla_hu, \nabla_hB, \nabla_hw)\|_{H^k}^2\dif\tau\nonumber\\
\leq~& C\|(u_0, B_0, w_0)\|_{H^k}^2\exp\left\{C\int_0^t\|(\nabla_hu, \nabla_hB, \nabla_hw)\|_{H^{k-1}}^2\dif\tau\right\},
\end{align}
for any $t\geq 0$, where $\nabla_h=(\partial_1, \partial_2)$.
\end{theorem}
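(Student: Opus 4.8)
The plan is to construct the solution by combining a standard local existence theorem (via mollification or a fixed-point argument) with a continuation criterion, and then to establish \eqref{th.2}--\eqref{th.4} as a priori estimates that prevent blow-up. The proof hinges on three structural features of \eqref{BMHD}: (i) the $u$-equation retains the \emph{full} Laplacian dissipation $\chi\Delta u$, so $u$ enjoys dissipation in all directions; (ii) the Lorentz force $(B\cdot\D)B$ and the induction term $(B\cdot\D)u$ cancel in the $L^2$ pairing because $\D\cdot B=0$; and (iii) the gyroscopic coupling obeys $\chi\|\D u\|_{L^2}^2+4\chi\|w\|_{L^2}^2-4\chi\int(\D\times u)\cdot w\,\dif x=\chi\|\D\times u-2w\|_{L^2}^2\geq0$, so the linear exchange between $u$ and $w$ is dissipative once the damping $4\chi w$ is taken into account. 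The remaining difficulty is the complete absence of vertical dissipation in the $B$- and $w$-equations, which I would compensate for, following the idea of \cite{MR4850530}, by propagating the smallness of $\|\partial_3(u_0,B_0,w_0)\|_{L^2}$ and by systematically using anisotropic Sobolev inequalities of the type
\begin{equation*}
\int_{\mathbb{R}^3}|fgh|\,\dif x\leq C\|f\|_{L^2}^{1/2}\|\partial_1 f\|_{L^2}^{1/2}\|g\|_{L^2}^{1/2}\|\partial_2 g\|_{L^2}^{1/2}\|h\|_{L^2}^{1/2}\|\partial_3 h\|_{L^2}^{1/2}.
\end{equation*}

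\textbf{The $L^2$ and $\partial_3$ estimates.} First I would test \eqref{BMHD} with $(u,B,w)$; the transport terms vanish, the magnetic terms cancel, and the gyroscopic terms combine into the nonnegative square above, yielding $\frac{d}{dt}\|(u,B,w)\|_{L^2}^2$ plus horizontal (and, for $u$, full) dissipation equal to zero. The crucial step is the estimate for $\|\partial_3(u,B,w)\|_{L^2}$: apply $\partial_3$ to each equation and test with $\partial_3(u,B,w)$. Every resulting nonlinear term is a triple product to which the anisotropic inequality applies; whenever a factor $\partial_3 u_3$ (or $\partial_3 B_3$) arises I would invoke $\D\cdot u=0$ (resp. $\D\cdot B=0$) to rewrite it as $-(\partial_1 u_1+\partial_2 u_2)$, thereby ensuring that each term carries \emph{at least one horizontal derivative}, controlled by the dissipation, while the genuinely vertical derivatives sit inside the small factors $\|\partial_3(\cdot)\|_{L^2}$. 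This produces a differential inequality of the schematic form $\frac{d}{dt}E(t)+D(t)\leq C\sqrt{E(t)}\,D(t)$, where $E=\|(u,B,w)\|_{L^2}^2+\|\partial_3(u,B,w)\|_{L^2}^2$ and $D$ collects the horizontal gradients together with $\|\D u\|_{L^2}^2$ and $\|w\|_{L^2}^2$. A bootstrap argument, using the smallness \eqref{th.1} to guarantee $C\sqrt{E}\leq\frac12$, closes the estimate and gives \eqref{th.2}.

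\textbf{The $H^1$ estimate.} To obtain \eqref{th.3} I would estimate $\|\D(u,B,w)\|_{L^2}$ by testing the equations against $-\Delta(u,B,w)$, equivalently by differentiating once and pairing. The full dissipation of $u$ and the horizontal dissipation of $(B,w)$ supply $\|\D u\|_{L^2}^2$ and $\|\nabla_h(B,w)\|_{L^2}^2$ on the left; the nonlinear terms are again handled by the anisotropic inequality, now distributing derivatives so that the uncontrolled vertical directions are absorbed by the already-bounded quantities $\|\partial_3(u,B,w)\|_{L^2}$ from \eqref{th.2}. Integrating in time and combining with the $L^2$-level dissipation yields the time-integrability of $\|\nabla_h(u,B,w)\|_{H^1}^2$ and the uniform $H^1$ bound in \eqref{th.3}. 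Uniqueness follows from a routine energy estimate on the difference of two solutions.

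\textbf{The $H^k$ estimate and the main obstacle.} For \eqref{th.4} I would apply $\partial^\alpha$ with $|\alpha|\leq k$, test with $\partial^\alpha(u,B,w)$, and sum. After the cancellations noted above, the essential task is to bound the commutator-type terms $\sum_{0<\beta\leq\alpha}\binom{\alpha}{\beta}\int(\partial^\beta u\cdot\D)\partial^{\alpha-\beta}f\cdot\partial^\alpha f\,\dif x$ and their magnetic and gyroscopic analogues. Using the anisotropic product estimates and, once more, the divergence-free relations to convert each stray $\partial_3$ into horizontal derivatives, these terms can be dominated by $C\|\nabla_h(u,B,w)\|_{H^{k-1}}^2\,\|(u,B,w)\|_{H^k}^2$, i.e. the highest-order factor appears only through the dissipation-controlled horizontal-gradient norm at order $k-1$, while the full $H^k$ norm plays the role of the Gronwall quantity. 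Gronwall's inequality then delivers the exponential bound in \eqref{th.4}. I expect the \emph{hardest point} to be precisely the closure of the $\partial_3$-estimate in the second step: it is there that one must verify that no term ever requires control of a purely vertical second derivative such as $\|\partial_3^2(B,w)\|_{L^2}$, and that the interplay between the divergence-free conditions and the anisotropic inequalities leaves every dangerous term with a redeemable horizontal derivative and a small vertical factor.
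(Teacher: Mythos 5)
Your proposal is correct and follows essentially the same route as the paper: $L^2$ plus $\partial_3$ energy estimates closed by anisotropic triple-product inequalities and the divergence-free substitutions $\partial_3u_3=-\nabla_h\cdot u_h$, $\partial_3B_3=-\nabla_h\cdot B_h$, a bootstrap using only the smallness \eqref{th.1}, then horizontal-derivative estimates for \eqref{th.3}, and commutator estimates of the form $C\|(\nabla_hu,\nabla_hB,\nabla_hw)\|_{H^{k-1}}^2\|(u,B,w)\|_{H^k}^2$ plus Gronwall for \eqref{th.4}. One caveat: your structural feature (i) overstates the role of $\chi\Delta u$ --- as your own feature (iii) shows, this dissipation is exactly consumed in absorbing the gyroscopic coupling (the quadratic form $\chi\|\nabla\times u-2w\|_{L^2}^2$ is only semi-definite), so every nonlinear term must be, and in both your sketch and the paper's proof indeed is, controlled by the horizontal dissipation together with the small vertical factors alone.
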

\begin{theorem}\label{th2}
Suppose that $(u_0, B_0, w_0)\in H^k$ for $k\geq 4$ with \eqref{th.1}, and
\begin{equation}\label{th2.0}
(u_0, B_0, w_0, \partial_3u_0, \partial_3B_0, \partial_3w_0)\in\dot{H}_h^{-\sigma},
\end{equation}
for some $\frac{k-1}{2(k-2)}<\sigma<1$. Then the global solution $(u, B, w)$ of \eqref{BMHD} satisfies
\begin{align}\label{th2.1}
\big\|(u, B, w, \partial_3u, \partial_3B, \partial_3w)(t)\big\|_{\dot{H}_h^{-\sigma}}\leq~& C, \nonumber\\
\big\|(u, B, w, \partial_3u, \partial_3B, \partial_3w)(t)\big\|_{L^2}^2\leq~& C(1+t)^{-\sigma}, \nonumber\\
\big\|(\nabla_hu, \nabla_hB, \nabla_hw)(t)\big\|_{L^2}^2\leq~& C(1+t)^{-(1+\sigma)},
\end{align}
for any $t>0$.
\end{theorem}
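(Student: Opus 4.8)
The plan is to combine a negative horizontal Sobolev estimate with the Fourier splitting method of Schonbek, taking the global bounds of \autoref{th} as the starting point. Write $G=(u,B,w)$ and introduce the functionals $\mathcal{E}(t)=\|(G,\partial_3 G)\|_{L^2}^2$ and $\mathcal{D}(t)=\|\nabla_h(G,\partial_3 G)\|_{L^2}^2$. The proof of \autoref{th} supplies both the finiteness $\int_0^\infty\mathcal{D}\,\dif t<\infty$ and, after absorbing the nonlinearity via the smallness \eqref{th.1}, the basic energy inequality $\frac{\dif}{\dif t}\mathcal{E}+c\mathcal{D}\le 0$; here the micropolar coupling $2\chi\nabla\times w$, $2\chi\nabla\times u$, the damping $4\chi w$ and the extra term $\chi\Delta u$ combine coercively, and the pressure drops out by $\nabla\cdot u=0$.

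First I would propagate the negative norm. Applying $\Lambda_h^{-\sigma}$ and $\Lambda_h^{-\sigma}\partial_3$ to \eqref{BMHD}, pairing with $\Lambda_h^{-\sigma}G$ and $\Lambda_h^{-\sigma}\partial_3 G$ in $L^2$ and summing, the linear part again yields the coercive horizontal dissipation $\|\Lambda_h^{-\sigma}\nabla_h(G,\partial_3 G)\|_{L^2}^2$ and the pressure vanishes. The real task is to bound nonlinear contributions such as $\int\Lambda_h^{-\sigma}\big((u\cdot\nabla)u\big)\cdot\Lambda_h^{-\sigma}u\,\dif x$. For $0<\sigma<1$ I would invoke the horizontal Hardy--Littlewood--Sobolev embedding $\|\Lambda_h^{-\sigma}F\|_{L^2}\le C\|F\|_{L_h^{p}L_{x_3}^2}$ with $p=\frac{2}{1+\sigma}\in(1,2)$, then Hölder and the anisotropic Sobolev inequalities, trading each nonlinearity for the product of an $L^2$-type factor and a dissipative factor involving $\nabla_h$. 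A Young absorption using \eqref{th.1} and the higher-order bounds \eqref{th.4}, followed by integration in time against $\int_0^\infty\mathcal{D}\,\dif t<\infty$, gives the uniform bound $\|(G,\partial_3 G)(t)\|_{\dot{H}_h^{-\sigma}}\le C$, the first line of \eqref{th2.1}.

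Next I would extract the decay by Fourier splitting in the horizontal frequencies. For any $f$ and $R>0$, splitting $\{|\xi_h|\le R\}$ against $\{|\xi_h|>R\}$ gives $\|f\|_{L^2}^2\le CR^{2\sigma}\|\Lambda_h^{-\sigma}f\|_{L^2}^2+CR^{-2}\|\nabla_h f\|_{L^2}^2$, so that $\mathcal{E}\le CR^{2\sigma}C_0+CR^{-2}\mathcal{D}$ with $C_0$ the negative-norm bound above. Feeding this into $\frac{\dif}{\dif t}\mathcal{E}+c\mathcal{D}\le 0$ and choosing $R^2=K/(1+t)$ with $K$ large produces $\frac{\dif}{\dif t}\mathcal{E}+\frac{\beta}{1+t}\mathcal{E}\le C(1+t)^{-(1+\sigma)}$ with $\beta>\sigma$; multiplying by $(1+t)^\beta$ and integrating yields $\mathcal{E}(t)\le C(1+t)^{-\sigma}$, the second line of \eqref{th2.1}. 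The horizontal-gradient decay follows from the same device one order higher: the analogous inequality $\frac{\dif}{\dif t}\|\nabla_h(G,\partial_3G)\|_{L^2}^2+c\|\nabla_h^2(G,\partial_3G)\|_{L^2}^2\le 0$, combined with the low-frequency estimate $\int_{|\xi_h|\le R}|\xi_h|^2|\hat f|^2\,\dif\xi\le R^2\|f\|_{L^2}^2\le R^2\mathcal{E}$ and the just-proven rate $\mathcal{E}\le C(1+t)^{-\sigma}$, closes with the weight $(1+t)^{1+\sigma}$ to give $\|\nabla_h G\|_{L^2}^2\le C(1+t)^{-(1+\sigma)}$.

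The main obstacle I anticipate is the nonlinear estimate in the anisotropic negative space $\dot{H}_h^{-\sigma}$: because only horizontal dissipation is available and the $x_3$-direction is controlled solely through $\partial_3 G$, the product estimates must be executed with anisotropic Sobolev inequalities that spend horizontal derivatives carefully and never call for a vertical derivative beyond $\partial_3$. It is exactly the requirement that these nonlinear terms be time-integrable against the dissipation, and that the interpolation in the splitting step close at the higher-derivative level, that forces the lower restriction $\sigma>\frac{k-1}{2(k-2)}$, while $\sigma<1$ is precisely the range in which the horizontal embedding $L_h^{2/(1+\sigma)}\hookrightarrow\dot{H}_h^{-\sigma}$ is valid.
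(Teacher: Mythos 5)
Your proposal contains a genuine circularity that its own mechanism cannot resolve. You propose to establish the uniform $\dot{H}_h^{-\sigma}$ bound \emph{first}, with time-integrability of the nonlinear terms supplied by $\int_0^\infty\mathcal{D}\,\dif t<\infty$. But after the Hardy--Littlewood--Sobolev and anisotropic estimates, the nonlinear contributions to the negative-norm energy identity are \emph{not} quadratic in dissipative quantities: they take the form (cf.\ \eqref{6.2}) $C\|u\|_{L^2}^{\sigma-\frac12}\|\Lambda_h^{-\sigma}u\|_{L^2}\big(\|\nabla_hu\|_{L^2}^{2-\sigma}\|\partial_3u\|_{L^2}^{\frac12}+\|\nabla_hu\|_{L^2}^{\frac32-\sigma}\|\partial_3u\|_{L^2}\big)$, where the dissipative factors carry total power $2-\sigma<2$ and $\frac32-\sigma<2$, the deficit being made up by factors that are merely bounded, not decaying. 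A function whose square is integrable in time need not have its power $2-\sigma$ or $\frac32-\sigma$ integrable, so \eqref{th.3}/\eqref{1.8} alone cannot render these integrals finite; one needs the decay rates of $\|\partial_3 U\|_{L^2}$ and $\|\nabla_hU\|_{L^2}$ themselves --- which in your ordering are only derived \emph{after} the negative-norm bound, from that very bound. The paper resolves this chicken-and-egg problem with a bootstrap: assume \eqref{3.1} on $[0,T]$, derive the decay rates of Lemma \ref{lem.4} and Lemma \ref{lem.5} on $[0,T]$ under that assumption, and only then use those rates --- together with the $H^k$ bound for $k\geq4$, Gagliardo--Nirenberg at order $k$, and the hypothesis $\sigma>\frac{k-1}{2(k-2)}$, which pushes every time exponent in \eqref{7.1} below $-1$ --- to improve the constant $2E_0$ to $\frac32E_0$ and close the argument. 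This bootstrap structure is the missing idea; your linear ordering of the three steps fails at its first step.

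Your gradient-decay step rests on a second unavailable ingredient: the inequality $\frac{\dif}{\dif t}\|\nabla_h(G,\partial_3G)\|_{L^2}^2+c\|\nabla_h^2(G,\partial_3G)\|_{L^2}^2\leq0$. At the $\nabla_h$ level the nonlinear estimate (cf.\ \eqref{1.17} and \eqref{5.5}) leaves on the right-hand side a term proportional to $\|\nabla_h\partial_3U\|_{L^2}^2$, a mixed second derivative that is \emph{not} dominated by the horizontal dissipation $\|\Delta_hU\|_{L^2}^2$ and hence cannot be absorbed; and enlarging the functional to include $\partial_3G$, as you suggest, forces energy estimates at the $\nabla_h\partial_3$ level whose nonlinearities involve still higher mixed derivatives, a cascade the anisotropic framework does not close under smallness of $\|(U_0,\partial_3U_0)\|_{L^2}$ alone. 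This is precisely why the paper abandons absorption for this step: it multiplies \eqref{5.5} by $(t-s)$, integrates, takes $s=t/2$, invokes the time-integrability \eqref{1.8} of the full anisotropic dissipation to obtain the intermediate rate $(1+t)^{-2\sigma}$, and then iterates ($a_n\to3\sigma$, with $3\sigma>1+\sigma$ since $\sigma>\frac12$) to reach $(1+t)^{-(1+\sigma)}$. For what it is worth, your Fourier-splitting derivation of the $L^2$ decay is a perfectly sound alternative to the paper's interpolation ODE in Lemma \ref{lem.4} --- but it only becomes usable once the bootstrap and the time-weighted iteration above are in place.
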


Now we outline the proof of our main results. The first part is to prove the stability of the solution. Denote $U=(u, B, w)^\top$. A key observation from \cite{MR4850530} is that, by making a suitable linear combination of the \textit{a priori} estimates of $\|U\|_{L^2}$, $\|\partial_3U\|_{L^2}$ and $\|\D_hU\|_{L^2}$, we are able to derive estimates of the following type
\begin{align*}
\frac{\dif}{\dif t}\|(U, \partial_3U)\|_{L^2}^2+C\|(\D_hU, \D_h\partial_3U)\|_{L^2}^2\leq~& C\|\partial_3U\|_{L^2}\|(\D_hU, \D_h\partial_3U)\|_{L^2}^2, \\
\frac{\dif}{\dif t}\|U\|_{H^1}^2+C\|\D_hU\|_{H^1}^2\leq~& C\|(U, \partial_3U)\|_{L^2}\|\D_hU\|_{H^1}^2,
\end{align*}
see \eqref{1.7} and \eqref{1.18} for details. Therefore, through a standard bootstrap argument, one finds that we only need the $L^2$-norm of the initial data and its vertical derivatives to be small to establish the $H^1$-stability of the solution. This improves the corresponding result in \cite{MR4703479}, where the authors need the initial data to be sufficiently small in $H^1$-space to obtain the stability of 3D anisotropic MHD equations. Next, through an inductive process, we further prove that the $H^k$-norm of the solution is uniformly bounded via its $H^{k-1}$-norm. As a result, we establish the $H^k$-stability of the solution without assuming the higher-order derivatives of the initial data to be small.

The second part is to establish decay estimates of the solution. Inspired by \cite{MR4252141}, we make the ansatz that the following estimate holds for any $t\in[0,T]$,
\begin{equation*}
\big\|(\Lambda_h^{-\sigma}U, \Lambda_h^{-\sigma}\partial_3U)(t)\big\|_{L^2}^2\leq 2E_0,
\end{equation*}
where $T>0$ is a finite time guaranteed by the local well-posedness theory, and
\begin{equation*}
E_0:=\big\|(\Lambda_h^{-\sigma}U_0, \Lambda_h^{-\sigma}\partial_3U_0)\big\|_{L^2}^2<\infty.
\end{equation*}
Here we note that we do not need $E_0$ to be small. With the help of the above assumption, we are able to derive some decay estimates of $\|U\|_{L^2}$, $\|\partial_3U\|_{L^2}$ and $\|\D_hU\|_{L^2}$ on the time interval $[0,T]$, see Lemma \ref{lem.4} and Lemma \ref{lem.5} for details. Next, by invoking various anisotropic inequalities (see \eqref{6.2}), we obtain a better upper bound of $\|(\Lambda_h^{-\sigma}U, \Lambda_h^{-\sigma}\partial_3U)\|_{L^2}$. Therefore, a standard bootstrap argument implies that the decay estimates hold for any $t>0$.

The rest of this paper is organized as follows. In \S \ref{prelim}, we give some notations and preliminary lemmas which will be used frequently in our following proof. In \S \ref{Section.3} and \S \ref{Section.4}, we prove the main results of this paper.

\section{Preliminaries}\label{prelim}

In this section, we list some basic lemmas to be used later. The first lemma provides anisotropic estimates for the integral of a triple product (see \cite{MR4186009}).
\begin{lemma}\label{pre.1}
The following estimates hold when the right-hand sides are all bounded,
\begin{align*}
\int|fgh|\dif x\leq~&C\|f\|_{L^2}^{\frac{1}{2}}\|\partial_1f\|_{L^2}^{\frac{1}{2}}\|g\|_{L^2}^{\frac{1}{2}}\|\partial_2g\|_{L^2}^{\frac{1}{2}}\|h\|_{L^2}^{\frac{1}{2}}\|\partial_3h\|_{L^2}^{\frac{1}{2}}, \\
\int|fgh|\dif x\leq~&C\|f\|_{L^2}^{\frac{1}{4}}\|\partial_1f\|_{L^2}^{\frac{1}{4}}\|\partial_2f\|_{L^2}^{\frac{1}{4}}\|\partial_1\partial_2f\|_{L^2}^{\frac{1}{4}}\|g\|_{L^2}^{\frac{1}{2}}\|\partial_3g\|_{L^2}^{\frac{1}{2}}\|h\|_{L^2}.
\end{align*}
\end{lemma}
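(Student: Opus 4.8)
The plan is to reduce both inequalities to repeated use of the one-dimensional Gagliardo--Nirenberg (Agmon) inequality $\|\phi\|_{L^\infty(\R)}\le\sqrt2\,\|\phi\|_{L^2(\R)}^{1/2}\|\phi'\|_{L^2(\R)}^{1/2}$ applied on individual slices, combined with H\"older's inequality in the remaining variables. The guiding idea is that each factor $f,g,h$ should be estimated in $L^\infty$ along a \emph{different} coordinate direction, so that the three derivative directions $\partial_1,\partial_2,\partial_3$ are distributed one per factor (first inequality), whereas in the second inequality two horizontal derivatives are loaded onto $f$ and one $x_3$-derivative onto $g$, with $h$ kept in $L^2$.

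For the first inequality, I would set $F(x_2,x_3)=\sup_{x_1}|f|$, $G(x_1,x_3)=\sup_{x_2}|g|$ and $H(x_1,x_2)=\sup_{x_3}|h|$, so that pointwise $|fgh|\le FGH$ and $\int|fgh|\dif x\le\int FGH\dif x$ with $F,G,H$ each depending on only two of the three variables. The key middle step is the $L^2$ Loomis--Whitney estimate $\int_{\R^3}FGH\dif x\le\|F\|_{L^2_{x_2,x_3}}\|G\|_{L^2_{x_1,x_3}}\|H\|_{L^2_{x_1,x_2}}$, which I would obtain by integrating in $x_1$ first, applying Cauchy--Schwarz to the pair $G,H$, and then Cauchy--Schwarz again in $(x_2,x_3)$. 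Finally, the one-dimensional inequality gives $\sup_{x_1}|f|^2\le2\|f\|_{L^2_{x_1}}\|\partial_1f\|_{L^2_{x_1}}$ on each slice; integrating in $(x_2,x_3)$ and using Cauchy--Schwarz yields $\|F\|_{L^2_{x_2,x_3}}\le C\|f\|_{L^2}^{1/2}\|\partial_1f\|_{L^2}^{1/2}$, and the analogous bounds hold for $G$ and $H$. Multiplying the three estimates produces exactly the claimed inequality.

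For the second inequality, I would instead bound $f$ in $L^\infty$ over the \emph{whole} horizontal plane and $g$ in $L^\infty$ along $x_3$, keeping $h$ in $L^2$. Writing $\bar F(x_3)=\sup_{x_1,x_2}|f|$ and $\bar G(x_1,x_2)=\sup_{x_3}|g|$, I have $\int|fgh|\dif x\le\int_{\R^3}\bar F(x_3)\,\bar G(x_1,x_2)\,|h|\dif x$; applying Cauchy--Schwarz in $(x_1,x_2)$ for fixed $x_3$ and then in $x_3$ gives the clean bound $\|\bar G\|_{L^2_h}\,\|\bar F\|_{L^2_{x_3}}\,\|h\|_{L^2}$. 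The factor $\|\bar G\|_{L^2_h}\le C\|g\|_{L^2}^{1/2}\|\partial_3g\|_{L^2}^{1/2}$ follows exactly as in the first part. For $\bar F$ I would use the two-dimensional anisotropic bound $\sup_{x_1,x_2}|f|\le C\|f\|_{L^2_h}^{1/4}\|\partial_1f\|_{L^2_h}^{1/4}\|\partial_2f\|_{L^2_h}^{1/4}\|\partial_1\partial_2f\|_{L^2_h}^{1/4}$ on each horizontal slice, and then integrate in $x_3$ via H\"older with four exponents all equal to $4$, which turns the slice norms raised to the power $1/2$ into the four global factors $\|f\|_{L^2}^{1/4}\|\partial_1f\|_{L^2}^{1/4}\|\partial_2f\|_{L^2}^{1/4}\|\partial_1\partial_2f\|_{L^2}^{1/4}$. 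Collecting terms gives the second estimate.

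The main obstacle is the two-dimensional anisotropic $L^\infty$ bound with the precise exponents $1/4$. Starting from $\phi(x_1,x_2)^2=\int_{-\infty}^{x_1}\int_{-\infty}^{x_2}\partial_1\partial_2(\phi^2)\dif y_2\dif y_1$ one readily gets $\|\phi\|_{L^\infty}^2\le2\|\partial_1\phi\|_{L^2}\|\partial_2\phi\|_{L^2}+2\|\phi\|_{L^2}\|\partial_1\partial_2\phi\|_{L^2}$, but converting this \emph{sum} into the single product form requires an anisotropic scaling $(x_1,x_2)\mapsto(\lambda x_1,\eta x_2)$ followed by optimization over $\lambda,\eta$, which works precisely because both terms share the correct homogeneity. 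The only other delicate point is bookkeeping the H\"older exponents so that the slice norms reassemble into global $L^2$ norms; everything else is routine. Since the statement is quoted from \cite{MR4186009}, one could alternatively cite it directly, but the argument above is self-contained.
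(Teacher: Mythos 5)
Your proof of the first inequality is complete and correct: the pointwise bound by $F,G,H$, the $L^2$ Loomis--Whitney step (integrate in $x_1$, Cauchy--Schwarz on the pair $G,H$, then Cauchy--Schwarz in $(x_2,x_3)$), and the slice-wise one-dimensional Agmon inequality assemble exactly as you describe. Note that the paper itself offers no proof of this lemma --- it is quoted from the reference \cite{MR4186009} --- so there is no internal argument to compare against; your route is the standard one in that literature. The architecture of your proof of the second inequality is also right: estimating $f$ in $L^\infty$ of the horizontal variables, $g$ in $L^\infty_{x_3}$, keeping $h$ in $L^2$, and reassembling the slice norms by H\"older with exponents $(4,4,4,4)$ does produce the stated powers.

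The genuine gap is in the one step you flag as the main obstacle: the derivation of the two-dimensional bound $\|\phi\|_{L^\infty(\R^2)}\le C\|\phi\|_{L^2}^{1/4}\|\partial_1\phi\|_{L^2}^{1/4}\|\partial_2\phi\|_{L^2}^{1/4}\|\partial_1\partial_2\phi\|_{L^2}^{1/4}$ from the sum estimate $\|\phi\|_{L^\infty}^2\le 2\|\partial_1\phi\|_{L^2}\|\partial_2\phi\|_{L^2}+2\|\phi\|_{L^2}\|\partial_1\partial_2\phi\|_{L^2}$ via anisotropic scaling. Under $(x_1,x_2)\mapsto(\lambda x_1,\eta x_2)$ one has $\|\phi_{\lambda,\eta}\|_{L^2}=(\lambda\eta)^{-1/2}\|\phi\|_{L^2}$, $\|\partial_1\phi_{\lambda,\eta}\|_{L^2}=\lambda^{1/2}\eta^{-1/2}\|\partial_1\phi\|_{L^2}$, $\|\partial_2\phi_{\lambda,\eta}\|_{L^2}=\lambda^{-1/2}\eta^{1/2}\|\partial_2\phi\|_{L^2}$ and $\|\partial_1\partial_2\phi_{\lambda,\eta}\|_{L^2}=(\lambda\eta)^{1/2}\|\partial_1\partial_2\phi\|_{L^2}$, so \emph{both} products $\|\partial_1\phi\|_{L^2}\|\partial_2\phi\|_{L^2}$ and $\|\phi\|_{L^2}\|\partial_1\partial_2\phi\|_{L^2}$ are invariant under every such dilation, exactly like the left-hand side. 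Optimizing over $\lambda,\eta$ therefore yields nothing: the shared homogeneity you invoke is precisely why the scaling trick is powerless here, since a sum can only be upgraded to a product by scaling when the two terms scale \emph{differently}. The fix is to iterate the one-dimensional Agmon inequality instead. For fixed $x_2$ write $\sup_{x_1}|\phi|^2\le 2A(x_2)B(x_2)$ with $A(x_2)=\|\phi(\cdot,x_2)\|_{L^2_{x_1}}$ and $B(x_2)=\|\partial_1\phi(\cdot,x_2)\|_{L^2_{x_1}}$; then $\sup_{x_2}A^2\le\int_{\R}|\partial_2(A^2)|\dif x_2\le 2\|\phi\|_{L^2}\|\partial_2\phi\|_{L^2}$ and $\sup_{x_2}B^2\le 2\|\partial_1\phi\|_{L^2}\|\partial_1\partial_2\phi\|_{L^2}$, and multiplying these gives the product form with an absolute constant. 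With that replacement, your proof of the second inequality closes, and the remaining bookkeeping is correct.
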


The second lemma provides an estimate for the $L^q$-norm of a one-dimensional function, which serves as a basic ingredient for anisotropic estimates (see \cite{MR3994306}).
\begin{lemma}\label{pre.2}
Let $q\in[2, \infty]$ and $s>\frac{1}{2}-\frac{1}{q}$. Then there exists a positive constant $C$ such that for any $f\in H^s(\mathbb{R})$, it holds that
\begin{equation*}
\|f\|_{L^q(\mathbb{R})}\leq C\|f\|_{L^2(\R)}^{1-\frac{1}{s}(\frac{1}{2}-\frac{1}{q})}\|\Lambda^sf\|_{L^2(\mathbb{R})}^{\frac{1}{s}(\frac{1}{2}-\frac{1}{q})}.
\end{equation*}
In particular, if $q=\infty$ and $s=1$, then any $f\in H^1(\R)$ satisfies
\begin{equation*}
\|f\|_{L^\infty(\mathbb{R})}\leq C\|f\|_{L^2(\R)}^{\frac{1}{2}}\|f'\|_{L^2(\mathbb{R})}^{\frac{1}{2}}.
\end{equation*}
\end{lemma}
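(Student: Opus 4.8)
The stated inequality is a one-dimensional Gagliardo--Nirenberg interpolation involving the fractional operator $\Lambda^s$, so the natural setting for the proof is frequency space. Writing $\theta=\frac{1}{s}\bigl(\frac12-\frac1q\bigr)$, the hypotheses $q\ge 2$ and $s>\frac12-\frac1q$ guarantee exactly $0\le\theta<1$, which is what makes the two interpolation exponents $1-\theta$ and $\theta$ admissible. The plan is to prove the endpoint $q=\infty$ by a direct splitting of the Fourier inversion integral, to treat the range $2<q<\infty$ by a homogeneous Littlewood--Paley decomposition together with the one-dimensional Bernstein inequality, and finally to read off the particular case $q=\infty$, $s=1$ as a specialization. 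The case $q=2$ is trivial, since then $\theta=0$ and the asserted bound reduces to $\|f\|_{L^2}\le C\|f\|_{L^2}$.

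For $q=\infty$ (so that necessarily $s>\frac12$), Fourier inversion and Plancherel give the pointwise bound $|f(x)|\le C\int_{\R}|\hat f(\xi)|\,\dif\xi$, which I would split at a free radius $R>0$ into the regions $\{|\xi|\le R\}$ and $\{|\xi|>R\}$. On the low-frequency part, Cauchy--Schwarz and Plancherel yield $\int_{|\xi|\le R}|\hat f|\,\dif\xi\le C R^{1/2}\|f\|_{L^2}$, while on the high-frequency part, inserting the factor $|\xi|^{-s}|\xi|^{s}$ and applying Cauchy--Schwarz gives $\int_{|\xi|>R}|\hat f|\,\dif\xi\le C\bigl(\int_{|\xi|>R}|\xi|^{-2s}\,\dif\xi\bigr)^{1/2}\|\Lambda^s f\|_{L^2}\le C R^{\frac12-s}\|\Lambda^s f\|_{L^2}$, where the radial integral converges precisely because $2s>1$. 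Optimizing the resulting sum $C R^{1/2}\|f\|_{L^2}+C R^{\frac12-s}\|\Lambda^s f\|_{L^2}$ over the choice $R=\bigl(\|\Lambda^s f\|_{L^2}/\|f\|_{L^2}\bigr)^{1/s}$ balances the two terms and produces $\|f\|_{L^\infty}\le C\|f\|_{L^2}^{1-\frac{1}{2s}}\|\Lambda^s f\|_{L^2}^{\frac{1}{2s}}$, which is the claim for $q=\infty$.

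For $2<q<\infty$ I would decompose $f=\sum_j \dot{\Delta}_j f$ into homogeneous Littlewood--Paley blocks localized to $|\xi|\sim 2^j$ and invoke the one-dimensional Bernstein inequality $\|\dot{\Delta}_j f\|_{L^q}\le C\,2^{j(\frac12-\frac1q)}\|\dot{\Delta}_j f\|_{L^2}$. Splitting the series $\sum_j 2^{j(\frac12-\frac1q)}\|\dot{\Delta}_j f\|_{L^2}$ at a dyadic threshold $2^J$, the low-frequency tail is summed by Cauchy--Schwarz and a geometric series (convergent because $\frac12-\frac1q\ge 0$) to $\le C\,2^{J(\frac12-\frac1q)}\|f\|_{L^2}$, and the high-frequency tail, after factoring $2^{j(\frac12-\frac1q-s)}\cdot 2^{js}$, is summed to $\le C\,2^{J(\frac12-\frac1q-s)}\|\Lambda^s f\|_{L^2}$; optimizing over $J$ exactly as in the endpoint case reproduces the exponent $\theta$ and gives the general inequality. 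The main obstacle, and the point where the hypothesis is sharp, is the convergence of this high-frequency geometric series: it converges if and only if $s>\frac12-\frac1q$, which is precisely the assumed lower bound on $s$, while the condition $q\ge 2$ enters only through $\frac12-\frac1q\ge 0$ in the low-frequency sum. Finally, the particular case follows by setting $q=\infty$ and $s=1$, for which $\theta=\frac12$ and $\|\Lambda f\|_{L^2}=\|f'\|_{L^2}$ by Plancherel, recovering $\|f\|_{L^\infty}\le C\|f\|_{L^2}^{1/2}\|f'\|_{L^2}^{1/2}$.
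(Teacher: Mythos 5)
Your proof is correct, and it is worth noting at the outset that the paper itself contains no proof of this lemma: the statement is imported directly from \cite{MR3994306}, so there is no internal argument to compare against, and your proposal supplies a complete, self-contained derivation of what the paper merely cites. Both halves of your argument are the standard ones and are sound: the Fourier-splitting at a radius $R$ with Cauchy--Schwarz on each piece (the high-frequency tail $\int_{|\xi|>R}|\xi|^{-2s}\,\dif\xi$ being finite precisely because $2s>1$ when $q=\infty$), and, for $2<q<\infty$, the homogeneous Littlewood--Paley decomposition with the one-dimensional Bernstein inequality, the threshold $2^J$ being optimized exactly as $R$ was; the identification $\bigl(\sum_j 2^{2js}\|\dot{\Delta}_j f\|_{L^2}^2\bigr)^{1/2}\simeq\|\Lambda^s f\|_{L^2}$ is the usual Littlewood--Paley characterization of $\dot{H}^s(\R)$. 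Two cosmetic points deserve attention. First, in the low-frequency sum the geometric series $\sum_{j\le J}2^{2j(\frac12-\frac1q)}$ converges only under the strict inequality $\frac12-\frac1q>0$, so your parenthetical ``convergent because $\frac12-\frac1q\ge 0$'' should read ``$>0$''; this is harmless since you dispose of $q=2$ separately and work in the range $2<q<\infty$ where strictness holds. Second, the optimization in $R$ (respectively $2^J$) tacitly assumes $\|f\|_{L^2}\neq 0$ and $\|\Lambda^s f\|_{L^2}\neq 0$; the degenerate cases are trivial, since $\Lambda^s f=0$ forces $\hat f=0$ almost everywhere and hence $f=0$ in $L^2$. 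Neither point affects the validity of the argument.
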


\section{Proof of Theorem \ref{th}}\label{Section.3}

In this section, we shall prove Theorem \ref{th}. Since the local well-posedness of solutions in $H^s$ with $s\geq 1$ follows from standard arguments (see \cite{MR1867882} for instance), it suffices to establish the global \textit{a priori} estimates in $H^s$ for the solution. The proof is divided into two parts, where we establish the global existence of solutions in $H^1$ and $H^k$ for $k\geq 2$.

\subsection{Global existence of solutions in $H^1$}

In the following lemma, we shall prove the \textit{a priori} estimates of $\|(u, B, w)\|_{L^2}$ and $\|(\partial_3u, \partial_3B, \partial_3w)\|_{L^2}$.
\begin{lemma}
Suppose that $(u, B, w)$ is a solution to \eqref{BMHD} with initial data $(u_0, B_0, w_0)\in H^1$ satisfying $\nabla\cdot u_0=0$, $\nabla\cdot B_0=0$ and \eqref{th.1}. Then it holds that
\begin{equation*}
\sup\limits_{t\geq 0}\Big(\|(u, B, w)\|_{L^2}+\|(\partial_3u, \partial_3B, \partial_3w )\|_{L^2}\Big)\leq C\varepsilon.
\end{equation*}
\end{lemma}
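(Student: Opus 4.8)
The plan is to control the combined quantity $\mathcal{E}(t):=\|(U,\partial_3U)(t)\|_{L^2}^2$, where $U=(u,B,w)^\top$, by deriving a single differential inequality for it and then closing with a continuity (bootstrap) argument. First I would establish the basic $L^2$ estimate. Taking the $L^2$ inner product of the three equations in \eqref{BMHD} with $u$, $B$, $w$ and summing, the advection terms $\int(u\cdot\D)u\cdot u\,\dif x$, $\int(u\cdot\D)B\cdot B\,\dif x$, $\int(u\cdot\D)w\cdot w\,\dif x$ and the pressure term vanish by $\D\cdot u=0$; the magnetic coupling satisfies $\int(B\cdot\D)B\cdot u\,\dif x+\int(B\cdot\D)u\cdot B\,\dif x=0$ by $\D\cdot B=0$; and the micropolar cross terms combine into $4\chi\int(\D\times u)\cdot w\,\dif x$, which is absorbed by the full dissipation $\chi\|\D u\|_{L^2}^2$ coming from $-\chi\Delta u$ together with the damping $4\chi\|w\|_{L^2}^2$, via Young's inequality and the identity $\|\D\times u\|_{L^2}=\|\D u\|_{L^2}$ valid for divergence-free $u$. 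The surviving horizontal dissipation $\mu\|\D_h u\|_{L^2}^2+\nu\|\D_h B\|_{L^2}^2+\gamma\|\D_h w\|_{L^2}^2$ then yields $\frac{\dif}{\dif t}\|U\|_{L^2}^2+C\|\D_h U\|_{L^2}^2\leq0$ with no nonlinear remainder.

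Next I would apply $\partial_3$ to \eqref{BMHD} and pair with $\partial_3U$. The same structural cancellations eliminate the pressure, the transport part $\int(u\cdot\D)\partial_3U\cdot\partial_3U\,\dif x$, the symmetric magnetic pair $\int(B\cdot\D)\partial_3B\cdot\partial_3u+(B\cdot\D)\partial_3u\cdot\partial_3B\,\dif x$, and the micropolar cross terms (now absorbed by $\chi\|\D\partial_3u\|_{L^2}^2$ and $4\chi\|\partial_3w\|_{L^2}^2$). What remains are the genuine nonlinear commutators, typically $\int(\partial_3u\cdot\D)u\cdot\partial_3u\,\dif x$, $\int(\partial_3u\cdot\D)B\cdot\partial_3B\,\dif x$, $\int(\partial_3u\cdot\D)w\cdot\partial_3w\,\dif x$, and $\int(\partial_3B\cdot\D)B\cdot\partial_3u+(\partial_3B\cdot\D)u\cdot\partial_3B\,\dif x$. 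The crucial point is to bound each by $C\|\partial_3U\|_{L^2}\|(\D_hU,\D_h\partial_3U)\|_{L^2}^2$ so that no uncontrolled vertical second derivative $\partial_3^2U$ appears, which is essential since there is no vertical dissipation. To achieve this I would split each term according to whether the transport derivative is horizontal or vertical: when it is vertical I replace $\partial_3u_3=-\partial_1u_1-\partial_2u_2$ (and likewise $\partial_3B_3$) using the divergence-free constraints, converting it into a horizontal derivative. I then invoke the anisotropic inequality of \autoref{pre.1}, assigning the $\partial_1,\partial_2,\partial_3$ derivatives so that the $\partial_3$ always falls on a factor already carrying a horizontal derivative (producing $\D_h\partial_3U$) while the undifferentiated factor is $\partial_3U$ measured in $L^2$; a Young inequality then converts the resulting fractional powers into $\|\partial_3U\|_{L^2}\big(\|\D_hU\|_{L^2}^2+\|\D_h\partial_3U\|_{L^2}^2\big)$.

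Adding the $L^2$ and $\partial_3$ estimates with appropriate weights then gives an inequality of the form
\[
\frac{\dif}{\dif t}\mathcal{E}(t)+C_1\|(\D_hU,\D_h\partial_3U)\|_{L^2}^2\leq C_2\|\partial_3U\|_{L^2}\|(\D_hU,\D_h\partial_3U)\|_{L^2}^2 .
\]
Finally I would close the estimate by a standard bootstrap. Assuming $\mathcal{E}(t)$ stays below $(C_1/2C_2)^2$ on a maximal time interval, the right-hand side is absorbed into the dissipation, so $\frac{\dif}{\dif t}\mathcal{E}(t)\leq0$ and hence $\mathcal{E}(t)\leq\mathcal{E}(0)\leq\varepsilon^2$; choosing $\varepsilon$ small enough that $\varepsilon<C_1/2C_2$ strictly improves the running assumption, and continuity propagates the bound to all $t\geq0$, yielding $\|(u,B,w)\|_{L^2}+\|(\partial_3u,\partial_3B,\partial_3w)\|_{L^2}\leq C\varepsilon$.

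The hard part will be the treatment of the nonlinear terms in the $\partial_3$ estimate: the derivative bookkeeping in \autoref{pre.1} must be arranged so that every term reduces to the horizontal dissipation $\|(\D_hU,\D_h\partial_3U)\|_{L^2}^2$ multiplied by the small factor $\|\partial_3U\|_{L^2}$, never requiring the absent vertical dissipation $\partial_3^2U$. Verifying the micropolar and magnetic cancellations at the $\partial_3$ level, and confirming that the borderline Young absorption of the curl cross terms still leaves enough horizontal dissipation, are the remaining technical points.
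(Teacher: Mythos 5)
Your proposal follows essentially the same route as the paper's proof: the same $L^2$ and $\partial_3$ energy estimates with the identical cancellation structure, the same use of the divergence-free constraints to replace $\partial_3u_3$ and $\partial_3B_3$ by horizontal derivatives, the same application of the anisotropic triple-product estimate of \autoref{pre.1} with Young's inequality to arrive at the differential inequality \eqref{1.7}, and the same smallness-plus-bootstrap closure. The argument is correct as outlined.
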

\begin{proof}
Taking the $L^2$-inner product of \eqref{BMHD} with $(u, B, w)$, we obtain after integration by parts that
\begin{align}\label{1.0}
&\frac{1}{2}\frac{\dif}{\dif t}\|(u, B, w)\|_{L^2}^2+\mu\|\D_hu\|_{L^2}^2+\nu\|\D_hB\|_{L^2}^2+\gamma\|\D_hw\|_{L^2}^2+\kappa\|\nabla\cdot w\|_{L^2}^2+\chi\|\D u\|_{L^2}^2+4\chi\|w\|_{L^2}^2 \nonumber\\
=~&4\chi\int w\cdot(\D\times u)\dif x \nonumber\\
\leq~&\chi\|\D u\|_{L^2}^2+4\chi\|w\|_{L^2}^2,
\end{align}
which implies
\begin{equation}\label{1.1}
\frac{1}{2}\frac{\dif}{\dif t}\|(u, B, w)\|_{L^2}^2+\mu\|\D_hu\|_{L^2}^2+\nu\|\D_hB\|_{L^2}^2+\gamma\|\D_hw\|_{L^2}^2\leq 0
\end{equation}

Applying $\partial_3$ to \eqref{BMHD}, then taking the $L^2$-inner product with $(\partial_3u, \partial_3B, \partial_3w)$, we obtain
\begin{align}\label{1.2}
&\frac{1}{2}\frac{\dif}{\dif t}\|(\partial_3u, \partial_3B, \partial_3w)\|_{L^2}^2+\mu\|\D_h\partial_3u\|_{L^2}^2+\nu\|\D_h\partial_3B\|_{L^2}^2+\gamma\|\D_h\partial_3w\|_{L^2}^2+\chi\|\D\partial_3u\|_{L^2}^2+4\chi\|\partial_3w\|_{L^2}^2 \nonumber\\
\leq~&4\chi\int\partial_3w\cdot\partial_3(\D\times u)\dif x\int\partial_3(B\cdot\D B)\cdot\partial_3u\dif x-\int\partial_3(u\cdot\D u)\cdot\partial_3u\dif x \nonumber\\
&+\int\partial_3(B\cdot\D u)\cdot\partial_3B\dif x-\int\partial_3(u\cdot\D B)\cdot\partial_3B\dif x-\int\partial_3(u\cdot\D w)\cdot\partial_3w\dif x \nonumber\\
\leq~&\chi\|\D\partial_3u\|_{L^2}^2+4\chi\|\partial_3w\|_{L^2}^2+\int\partial_3(B\cdot\D B)\cdot\partial_3u\dif x-\int\partial_3(u\cdot\D u)\cdot\partial_3u\dif x \nonumber\\
&+\int\partial_3(B\cdot\D u)\cdot\partial_3B\dif x-\int\partial_3(u\cdot\D B)\cdot\partial_3B\dif x-\int\partial_3(u\cdot\D w)\cdot\partial_3w\dif x \nonumber\\
:=~&\chi\|\D\partial_3u\|_{L^2}^2+4\chi\|\partial_3w\|_{L^2}^2+\sum\limits_{i=1}^5I_i.
\end{align}

After integration by parts and using the equation $\D\cdot B=0$, we get
\begin{align}\label{1.3}
I_1+I_3=~&\int\partial_3B\cdot\D B\cdot\partial_3u\dif x+\int\partial_3B\cdot\D u\cdot\partial_3B\dif x+\int B\cdot\D(\partial_3u\cdot\partial_3B)\dif x \nonumber\\
=~&\int\partial_3B_h\cdot\D_hB\cdot\partial_3u\dif x+\int\partial_3B_3\partial_3B\cdot\partial_3u\dif x \nonumber\\
&+\int\partial_3B_h\cdot\D_hu\cdot\partial_3B\dif x+\int\partial_3B_3\partial_3u\cdot\partial_3B\dif x \nonumber\\
=~&\int\partial_3B_h\cdot\D_hB\cdot\partial_3u\dif x-\int\partial_1B_1\partial_3B\cdot\partial_3u\dif x-\int\partial_2B_2\partial_3B\cdot\partial_3u\dif x \nonumber\\
&+\int\partial_3B_h\cdot\D_hu\cdot\partial_3B\dif x-\int\partial_1B_1\partial_3u\cdot\partial_3B\dif x-\int\partial_2B_2\partial_3u\cdot\partial_3B\dif x,
\end{align}
where $B_h=(B_1, B_2)$. By Lemma \ref{pre.1} and Young inequality, we have
\begin{align}\label{1.4}
I_1+I_3\leq~&C\int|\partial_3B_h||\D_hB||\partial_3u|\dif x+C\int|\partial_hB_h||\partial_3B||\partial_3u|\dif x+C\int|\partial_3B_h||\D_hu||\partial_3B|\dif x \nonumber\\
\leq~&C\|\partial_3B_h\|_{L^2}^{\frac{1}{2}}\|\partial_1\partial_3B_h\|_{L^2}^{\frac{1}{2}}\|\D_hB\|_{L^2}^{\frac{1}{2}}\|\D_h\partial_3B\|_{L^2}^{\frac{1}{2}}\|\partial_3u\|_{L^2}^{\frac{1}{2}}\|\partial_2\partial_3u\|_{L^2}^{\frac{1}{2}} \nonumber\\
&+C\|\partial_hB_h\|_{L^2}^{\frac{1}{2}}\|\partial_h\partial_3B_h\|_{L^2}^{\frac{1}{2}}\|\partial_3B\|_{L^2}^{\frac{1}{2}}\|\partial_1\partial_3B\|_{L^2}^{\frac{1}{2}}\|\partial_3u\|_{L^2}^{\frac{1}{2}}\|\partial_2\partial_3u\|_{L^2}^{\frac{1}{2}} \nonumber\\
&+C\|\partial_3B_h\|_{L^2}^{\frac{1}{2}}\|\partial_1\partial_3B_h\|_{L^2}^{\frac{1}{2}}\|\D_hu\|_{L^2}^{\frac{1}{2}}\|\D_h\partial_3u\|_{L^2}^{\frac{1}{2}}\|\partial_3B\|_{L^2}^{\frac{1}{2}}\|\partial_2\partial_3B\|_{L^2}^{\frac{1}{2}} \nonumber\\
\leq~& C\|(\partial_3u, \partial_3B)\|_{L^2}\|(\D_hu, \D_hB, \D_h\partial_3u, \D_h\partial_3B)\|_{L^2}^2.
\end{align}

Similarly, using the equation $\D\cdot u=0$, we have
\begin{align}\label{1.5}
I_2+I_4+I_5=~&-\int\partial_3u\cdot\D u\cdot\partial_3u\dif x-\int\partial_3u\cdot\D B\cdot\partial_3B\dif x-\int\partial_3u\cdot\D w\cdot\partial_3w\dif x \nonumber\\
=~&-\int\partial_3u_h\cdot\D_hu\cdot\partial_3u\dif x-\int\partial_3u_3|\partial_3u|^2\dif x-\int\partial_3u_h\cdot\D_hB\cdot\partial_3B\dif x \nonumber\\
&-\int\partial_3u_3|\partial_3B|^2\dif x-\int\partial_3u_h\cdot\D_hw\cdot\partial_3w\dif x-\int\partial_3u_3|\partial_3w|^2\dif x \nonumber\\
=~&-\int\partial_3u_h\cdot\D_hu\cdot\partial_3u\dif x+\int\left(\partial_1u_1+\partial_2u_2\right)|\partial_3u|^2\dif x-\int\partial_3u_h\cdot\D_hB\cdot\partial_3B\dif x \nonumber\\
&+\int\left(\partial_1u_1+\partial_2u_2\right)|\partial_3B|^2\dif x-\int\partial_3u_h\cdot\D_hw\cdot\partial_3w\dif x+\int\left(\partial_1u_1+\partial_2u_2\right)|\partial_3w|^2\dif x \nonumber\\
\leq~&C\int|\partial_3u_h||\D_hu||\partial_3u|\dif x+C\int|\D_hu_h||\partial_3u|^2\dif x+C\int|\partial_3u_h||\D_hB||\partial_3B|\dif x \nonumber\\
&+C\int|\D_hu_h||\partial_3B|^2\dif x+C\int|\partial_3u_h||\D_hw||\partial_3w|\dif x+C\int|\D_hu_h||\partial_3w|^2\dif x,
\end{align}
where $u_h=(u_1, u_2)$ and $w_h=(w_1, w_2)$. By Lemma \ref{pre.1} and Young inequality, we have
\begin{align}\label{1.6}
I_2+I_4+I_5\leq~&C\|\partial_3u_h\|_{L^2}^{\frac{1}{2}}\|\partial_1\partial_3u_h\|_{L^2}^{\frac{1}{2}}\|\D_hu\|_{L^2}^{\frac{1}{2}}\|\D_h\partial_3u\|_{L^2}^{\frac{1}{2}}\|\partial_3u\|_{L^2}^{\frac{1}{2}}\|\partial_2\partial_3u\|_{L^2}^{\frac{1}{2}} \nonumber\\
&+C\|\D_hu_h\|_{L^2}^{\frac{1}{2}}\|\D_h\partial_3u_h\|_{L^2}^{\frac{1}{2}}\|\partial_3u\|_{L^2}\|\partial_1\partial_3u\|_{L^2}^{\frac{1}{2}}\|\partial_2\partial_3u\|_{L^2}^{\frac{1}{2}} \nonumber\\
&+C\|\partial_3u_h\|_{L^2}^{\frac{1}{2}}\|\partial_1\partial_3u_h\|_{L^2}^{\frac{1}{2}}\|\D_hB\|_{L^2}^{\frac{1}{2}}\|\D_h\partial_3B\|_{L^2}^{\frac{1}{2}}\|\partial_3B\|_{L^2}^{\frac{1}{2}}\|\partial_2\partial_3B\|_{L^2}^{\frac{1}{2}} \nonumber\\
&+C\|\D_hu_h\|_{L^2}^{\frac{1}{2}}\|\D_h\partial_3u_h\|_{L^2}^{\frac{1}{2}}\|\partial_3B\|_{L^2}\|\partial_1\partial_3B\|_{L^2}^{\frac{1}{2}}\|\partial_2\partial_3B\|_{L^2}^{\frac{1}{2}} \nonumber\\
&+C\|\partial_3u_h\|_{L^2}^{\frac{1}{2}}\|\partial_1\partial_3u_h\|_{L^2}^{\frac{1}{2}}\|\D_hw\|_{L^2}^{\frac{1}{2}}\|\D_h\partial_3w\|_{L^2}^{\frac{1}{2}}\|\partial_3w\|_{L^2}^{\frac{1}{2}}\|\partial_2\partial_3w\|_{L^2}^{\frac{1}{2}} \nonumber\\
&+C\|\D_hu_h\|_{L^2}^{\frac{1}{2}}\|\D_h\partial_3u_h\|_{L^2}^{\frac{1}{2}}\|\partial_3w\|_{L^2}\|\partial_1\partial_3w\|_{L^2}^{\frac{1}{2}}\|\partial_2\partial_3w\|_{L^2}^{\frac{1}{2}} \nonumber\\
\leq~& C\|(\partial_3u, \partial_3B, \partial_3w)\|_{L^2}\|(\D_hu, \D_hB, \D_hw, \D_h\partial_3u, \D_h\partial_3B, \D_h\partial_3 w)\|_{L^2}^2.
\end{align}

Inserting \eqref{1.4} and \eqref{1.6} into \eqref{1.2}, and adding the resulting inequality to \eqref{1.1}, we obtain
\begin{align}\label{1.7}
&\frac{\dif}{\dif t}\left\|\left(u, B, w, \partial_3u, \partial_3B, \partial_3w\right)\right\|_{L^2}^2+C_0\left\|\left(\D_hu, \D_hB, \D_hw, \D_h\partial_3u, \D_h\partial_3B, \D_h\partial_3w\right)\right\|_{L^2}^2 \nonumber\\
\leq~&C\|(\partial_3u, \partial_3B, \partial_3w)\|_{L^2}\|(\D_hu, \D_hB, \D_hw, \D_h\partial_3u, \D_h\partial_3B, \D_h\partial_3 w)\|_{L^2}^2,
\end{align}
for some positive constant $C_0$. Integrating \eqref{1.7} in $(0,t)$, and taking $\varepsilon$ in \eqref{th.1} sufficiently small, we obtain after standard bootstrap argument that
\begin{align}\label{1.8}
&\left\|\left(u, B, w, \partial_3u, \partial_3B, \partial_3w\right)(t)\right\|_{L^2}^2+\int_0^t\left\|\left(\D_hu, \D_hB, \D_hw, \D_h\partial_3u, \D_h\partial_3B, \D_h\partial_3w\right)\right\|_{L^2}^2\dif\tau \nonumber\\
\leq~&C\left\|\left(u_0, B_0, w_0, \partial_3u_0, \partial_3B_0, \partial_3w_0\right)\right\|_{L^2}^2,
\end{align}
holds for any $t\geq 0$. This completes the proof.
\end{proof}

The next lemma is devoted to establishing the a priori estimates of $\|(\D_hu, \D_hB, \D_hw)\|_{L^2}$.
\begin{lemma}
Suppose that $(u, B, w)$ is a solution to \eqref{BMHD} with initial data $(u_0, B_0, w_0)\in H^1$ satisfying $\nabla\cdot u_0=0$, $\nabla\cdot B_0=0$ and \eqref{th.1}. Then it holds that
\begin{equation*}
\sup\limits_{t\geq 0}\|(u, B, w)\|_{H^1}^2+\int_0^\infty\|(\nabla_hu, \nabla_hB, \nabla_hw)\|_{H^1}^2\dif t\leq C\|(u_0, B_0, w_0)\|_{H^1}^2.
\end{equation*}
\end{lemma}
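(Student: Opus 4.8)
The plan is to promote the estimates of the preceding lemma to the level of $H^1$ by running a $\dot H^1$ energy estimate and combining it with the $L^2$ identity \eqref{1.1}. Write $U=(u,B,w)$. Since $\|U\|_{H^1}^2=\|U\|_{L^2}^2+\|\D U\|_{L^2}^2$ and \eqref{1.1} already controls $\|U\|_{L^2}^2$ with horizontal dissipation $\|\nabla_h U\|_{L^2}^2$, it suffices to estimate $\frac{\dif}{\dif t}\|\D U\|_{L^2}^2$, whose natural dissipation is $\|\D\nabla_h U\|_{L^2}^2$ (together with the stronger $\chi\|\D\D u\|_{L^2}^2$ coming from the full Laplacian $\chi\Delta u$ acting on $u$). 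Adding the two estimates then reconstructs the full quantity $\|\nabla_h U\|_{H^1}^2=\|\nabla_h U\|_{L^2}^2+\|\D\nabla_h U\|_{L^2}^2$ on the dissipation side.

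Concretely, I would apply $\D$ to each equation of \eqref{BMHD} and take the $L^2$ inner product with $\D U$. The pressure drops out by $\D\cdot u=0$; the micropolar coupling terms $2\chi\D\times w$, $2\chi\D\times u$ and the damping $4\chi w$ are treated exactly as in \eqref{1.0}--\eqref{1.2}, being absorbed by $\chi\|\D\D u\|_{L^2}^2$ and $4\chi\|\D w\|_{L^2}^2$ through Young's inequality; and the spin term $-\kappa\D(\D\cdot w)$ contributes the nonnegative quantity $\kappa\|\D(\D\cdot w)\|_{L^2}^2$ on the left. What remains are the quadratic interactions
\[
\int\D(u\cdot\D u)\cdot\D u\,\dif x,\quad \int\D(B\cdot\D B)\cdot\D u\,\dif x,\quad\int\D(B\cdot\D u)\cdot\D B\,\dif x,\quad\int\D(u\cdot\D B)\cdot\D B\,\dif x,\quad\int\D(u\cdot\D w)\cdot\D w\,\dif x.
\]

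The crux, and the main obstacle, is to bound each of these by $C\|(U,\partial_3 U)\|_{L^2}\|\nabla_h U\|_{H^1}^2$. Exactly as in the derivation of \eqref{1.3}--\eqref{1.6}, I would first use $\D\cdot u=\D\cdot B=0$ to integrate by parts: the top-order pieces, where $\D$ falls on the transport field, telescope or cancel, leaving only products carrying one derivative on each factor. Because there is no vertical smoothing, the decisive point is the placement of the unavoidable pure $\partial_3$-derivatives: whenever a factor carries $\partial_3$ with no horizontal derivative, it must be the factor measured in $L^2$ so that it is absorbed into $\|\partial_3 U\|_{L^2}$, while the remaining two factors are forced to carry horizontal derivatives controlled by $\|\nabla_h U\|_{H^1}$. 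Feeding this splitting into the anisotropic triple-product inequalities of Lemma \ref{pre.1}, with the three directional derivatives distributed as $\partial_1,\partial_2,\partial_3$ across the three factors, and then applying Young's inequality, yields the claimed form. The bookkeeping that guarantees every term lands precisely as $\|(U,\partial_3 U)\|_{L^2}\|\nabla_h U\|_{H^1}^2$ is the most delicate part of the argument.

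Finally, adding the resulting differential inequality to \eqref{1.1} gives
\begin{equation}\label{1.18}
\frac{\dif}{\dif t}\|U\|_{H^1}^2+C_1\|\nabla_h U\|_{H^1}^2\leq C\|(U,\partial_3 U)\|_{L^2}\|\nabla_h U\|_{H^1}^2.
\end{equation}
By the previous lemma and the smallness \eqref{th.1}, $\|(U,\partial_3 U)(t)\|_{L^2}\leq C\varepsilon$ for all $t\geq 0$; choosing $\varepsilon$ (hence $C\varepsilon$) small enough that the prefactor satisfies $C\|(U,\partial_3 U)\|_{L^2}\leq C_1/2$ lets me absorb the right-hand side into the dissipation, so that $\frac{\dif}{\dif t}\|U\|_{H^1}^2+\frac{C_1}{2}\|\nabla_h U\|_{H^1}^2\leq 0$. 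Integrating over $(0,t)$ then produces the uniform bound on $\|U\|_{H^1}^2$ and the space-time integrability of $\|\nabla_h U\|_{H^1}^2$, which is exactly the assertion of the lemma.
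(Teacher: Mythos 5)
Your overall strategy is the same as the paper's: an $H^1$-level energy estimate combined with absorption of the nonlinear terms via the smallness $\|(U,\partial_3U)\|_{L^2}\leq C\varepsilon$ inherited from the previous lemma. The only organizational difference is that you re-derive the full-gradient estimate and add it to \eqref{1.1}, whereas the paper tests with $(\Delta_hu,\Delta_hB,\Delta_hw)$ to get \eqref{1.17} and then adds the already-proved inequality \eqref{1.7}; both routes lead to \eqref{1.18}, and your direct absorption at the end (no bootstrap needed, since the previous lemma already gives the uniform bound $\|(U,\partial_3U)\|_{L^2}\leq C\varepsilon$) is legitimate.

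However, there is a genuine gap in your treatment of the nonlinearities: you claim that all terms can be closed by Lemma \ref{pre.1}, but this fails for the purely horizontal terms, i.e.\ those in which every factor carries only horizontal derivatives, such as $\int\partial_ku_h\cdot\D_hu\cdot\partial_ku\,\dif x$ with $k=1,2$ (the terms $J_{1,1},J_{1,2}$ in \eqref{1.10} and their analogues inside $J_2$--$J_5$). Applying Lemma \ref{pre.1} there, with $\partial_1,\partial_2,\partial_3$ distributed over the three factors, gives
\begin{equation*}
\int|\D_hu|^3\,\dif x\leq C\|\D_hu\|_{L^2}^{\frac{3}{2}}\|\partial_1\D_hu\|_{L^2}^{\frac{1}{2}}\|\partial_2\D_hu\|_{L^2}^{\frac{1}{2}}\|\partial_3\D_hu\|_{L^2}^{\frac{1}{2}}\leq C\|\D_hu\|_{L^2}^{\frac{3}{2}}\|\D\D_hu\|_{L^2}^{\frac{3}{2}}.
\end{equation*}
The leftover factor here is $\|\D_hu\|_{H^1}$, which is \emph{not} small: under \eqref{th.1} only the $L^2$-norms of $U$ and $\partial_3U$ are small, while $\|\D_hu\|_{L^2}$ can be large. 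Alternatively, interpolating $\|\D_hu\|_{L^2}\leq\|u\|_{L^2}^{\frac{1}{2}}\|\Delta_hu\|_{L^2}^{\frac{1}{2}}$ makes the dissipation appear with power $\frac{9}{4}>2$, which cannot be absorbed by the left-hand side either. So this term cannot be bounded by $\|(U,\partial_3U)\|_{L^2}\|\D_hU\|_{H^1}^2$ with your tools. The paper closes exactly this point with a different ingredient: the two-dimensional Gagliardo--Nirenberg inequality $\|\D_hu\|_{L_h^3}\leq C\|u\|_{L_h^2}^{\frac{1}{3}}\|\Delta_hu\|_{L_h^2}^{\frac{2}{3}}$, which skips the first-derivative level and hence produces an \emph{underived} factor of $u$, combined with Lemma \ref{pre.2} in the $x_3$-variable; see \eqref{1.11}. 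This yields $\|\D_hu\|_{L^3}^3\leq C\|u\|_{L^2}^{\frac{1}{2}}\|\partial_3u\|_{L^2}^{\frac{1}{2}}\|\Delta_hu\|_{L^2}^2$, which has the required form. Your terms carrying at least one pure $\partial_3$ factor are indeed handled by Lemma \ref{pre.1} together with the interpolation $\|\D_hu\|_{L^2}\leq\|u\|_{L^2}^{\frac{1}{2}}\|\Delta_hu\|_{L^2}^{\frac{1}{2}}$, as in \eqref{1.12}, but without the Gagliardo--Nirenberg step your proof does not close.
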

\begin{proof}
Taking the $L^2$-inner product of \eqref{BMHD} with $(\Delta_hu, \Delta_hB, \Delta_hw)$, we obtain after integration by parts that
\begin{align}\label{1.9}
&\frac{1}{2}\frac{\dif}{\dif t}\left\|\left(\D_hu, \D_hB, \D_hw\right)\right\|_{L^2}^2+\mu\|\Delta_hu\|_{L^2}^2+\nu\|\Delta_hB\|_{L^2}^2+\gamma\|\Delta_hw\|_{L^2}^2+\chi\|\D\D_h u\|_{L^2}^2+4\chi\|\D_hw\|_{L^2}^2 \nonumber\\
\leq~&4\chi\int\D_hw\cdot\D_h(\D\times u)\dif x+\int u\cdot\D u\cdot\Delta_hu\dif x-\int B\cdot\D B\cdot\Delta_hu\dif x \nonumber\\
&+\int u\cdot\D B\cdot\Delta_hB\dif x-\int B\cdot\D u\cdot\Delta_hB\dif x+\int u\cdot\D w\cdot\Delta_hw\dif x \nonumber\\
\leq~&\chi\|\D\D_h u\|_{L^2}^2+4\chi\|\D_hw\|_{L^2}^2+\int u\cdot\D u\cdot\Delta_hu\dif x-\int B\cdot\D B\cdot\Delta_hu\dif x \nonumber\\
&+\int u\cdot\D B\cdot\Delta_hB\dif x-\int B\cdot\D u\cdot\Delta_hB\dif x+\int u\cdot\D w\cdot\Delta_hw\dif x \nonumber\\
:=~&\chi\|\D\D_h u\|_{L^2}^2+4\chi\|\D_hw\|_{L^2}^2+\sum\limits_{i=1}^5J_i.
\end{align}

Noting that $\nabla\cdot u=0$, we obtain after integration by parts that
\begin{align}\label{1.10}
J_1=~&-\sum\limits_{i,j=1}^3\sum\limits_{k=1,2}\int\partial_ku_j\partial_ju_i\partial_ku_i\dif x \nonumber\\
=~&-\int\partial_1u_h\cdot\D_hu\cdot\partial_1u\dif x-\int\partial_2u_h\cdot\D_hu\cdot\partial_2u\dif x \nonumber\\
&-\int\partial_1u_3\partial_3u\cdot\partial_1u\dif x-\int\partial_2u_3\partial_3u\cdot\partial_2u\dif x \nonumber\\
:=~&\sum\limits_{i=1}^4J_{1,i}.
\end{align}

From H\"older, Gagliardo-Nirenberg inequalities and Lemma \ref{pre.2}, we have
\begin{align}\label{1.11}
J_{1,1}+J_{1,2}\leq~&C\|\D_hu\|_{L^3}^3=C\big\|\|\D_hu\|_{L_h^3}\big\|_{L^3_{x_3}}^3 \nonumber\\
\leq~&C\Big\|\|u\|_{L_h^2}^{\frac{1}{3}}\|\Delta_hu\|_{L_h^2}^{\frac{2}{3}}\Big\|_{L^3_{x_3}}^3 \nonumber\\
\leq~&C\big\|\|u\|_{L^\infty_{x_3}}\big\|_{L_h^2}\|\Delta_hu\|_{L^2}^2 \nonumber\\
\leq~&C\Big\|\|u\|_{L^2_{x_3}}^{\frac{1}{2}}\|\partial_3u\|_{L^2_{x_3}}^{\frac{1}{2}}\Big\|_{L_h^2}\|\Delta_hu\|_{L^2}^2 \nonumber\\
\leq~&C\|u\|_{L^2}^{\frac{1}{2}}\|\partial_3u\|_{L^2}^{\frac{1}{2}}\|\Delta_hu\|_{L^2}^2.
\end{align}

From Lemma \ref{pre.1}, Young inequality and the equation $\nabla\cdot u=0$, we have
\begin{align}\label{1.12}
J_{1,3}+J_{1,4}\leq~&C\int|\partial_3u||\D_hu|^2\dif x \nonumber\\
\leq~&C\|\D_hu\|_{L^2}\|\partial_1\D_hu\|_{L^2}^{\frac{1}{2}}\|\partial_3u\|_{L^2}^{\frac{1}{2}}\|\partial_2\partial_3u\|_{L^2}^{\frac{1}{2}}\|\D_h\partial_3u\|_{L^2}^{\frac{1}{2}} \nonumber\\
\leq~&C\|u\|_{L^2}^{\frac{1}{2}}\|\Delta_hu\|_{L^2}^{\frac{1}{2}}\|\partial_1\D_hu\|_{L^2}^{\frac{1}{2}}\|\partial_3u\|_{L^2}^{\frac{1}{2}}\|\partial_2\partial_3u\|_{L^2}^{\frac{1}{2}}\|\D_h\partial_3u\|_{L^2}^{\frac{1}{2}} \nonumber\\
\leq~&C\|u\|_{L^2}^{\frac{1}{2}}\|\partial_3u\|_{L^2}^{\frac{1}{2}}\|\D_h\D u\|_{L^2}^2,
\end{align}
where we have used the fact
\begin{equation*}
\|\D_hu\|_{L^2}^2=\int\D_hu\cdot\D_hu\dif x=-\int u\cdot\Delta_hu\dif x\leq\|u\|_{L^2}\|\Delta_hu\|_{L^2}.
\end{equation*}
Inserting \eqref{1.11} and \eqref{1.12} into \eqref{1.10}, we obtain
\begin{equation}\label{1.13}
J_1\leq C\|u\|_{L^2}^{\frac{1}{2}}\|\partial_3u\|_{L^2}^{\frac{1}{2}}\|\D_h\D u\|_{L^2}^2.
\end{equation}

Similarly, we have
\begin{align}\label{1.14}
J_3+J_5=~&-\sum\limits_{i=1}^3\sum\limits_{j=1,2}\int\partial_ju_i\partial_iB\cdot\partial_jB\dif x-\sum\limits_{i=1}^3\sum\limits_{j=1,2}\int\partial_ju_i\partial_iw\cdot\partial_jw\dif x \nonumber\\
=~&-\int\partial_1u_h\cdot\D_hB\cdot\partial_1B\dif x-\int\partial_2u_h\cdot\D_hB\cdot\partial_2B\dif x-\int\partial_1u_3\partial_3B\cdot\partial_1B\dif x \nonumber\\
&-\int\partial_2u_3\partial_3B\cdot\partial_2B\dif x-\int\partial_1u_h\cdot\D_hw\cdot\partial_1w\dif x-\int\partial_2u_h\cdot\D_hw\cdot\partial_2w\dif x \nonumber\\
&-\int\partial_1u_3\partial_3w\cdot\partial_1w\dif x-\int\partial_2u_3\partial_3w\cdot\partial_2w\dif x \nonumber\\
\leq~&C\|(\D_hu, \D_hB, \D_hw)\|_{L^3}^3+C\int\Big(|\partial_3B||\D_hB|+|\partial_3w||\D_hw|\Big)|\D_hu|\dif x \nonumber\\
\leq~&C\|(u, B, w, \partial_3u, \partial_3B, \partial_3w)\|_{L^2}\|(\D_h\D u, \D_h\D B, \D_h\D w)\|_{L^2}^2.
\end{align}

Now it remains to estimate $J_2$ and $J_4$. Noting that $\nabla\cdot B=0$, we obtain after integration by parts that
\begin{align}\label{1.15}
J_2+J_4=~&\sum\limits_{i,j=1}^3\sum\limits_{k=1,2}\int\partial_ku_i\partial_jB_i\partial_kB_j\dif x+\sum\limits_{i,j=1}^3\sum\limits_{k=1,2}\int\partial_kB_i\partial_ju_i\partial_kB_j\dif x \nonumber\\
=~&\int\partial_1u\cdot\partial_3B\partial_1B_3\dif x+\int\partial_2u\cdot\partial_3B\partial_2B_3\dif x+\sum\limits_{i,j=1,2}\int\partial_iu\cdot\partial_jB\partial_iB_j\dif x \nonumber\\
&+\int\partial_1B\cdot\partial_3u\partial_1B_3\dif x+\int\partial_2B\cdot\partial_3u\partial_2B_3\dif x+\sum\limits_{i,j=1,2}\int\partial_iB\cdot\partial_ju\partial_iB_j\dif x \nonumber\\
\leq~&C\int|\D_hB|\big(|\D_hu|+|\D_hB|\big)\big(|\partial_3u|+|\partial_3B|\big)\dif x+C\|(\D_hu, \D_hB)\|_{L^3}^3.
\end{align}
Similar to \eqref{1.11} and \eqref{1.12}, we have
\begin{equation}\label{1.16}
J_2+J_4\leq C\|(u, B, \partial_3u, \partial_3B)\|_{L^2}\|(\D_h\D u, \D_h\D B)\|_{L^2}^2.
\end{equation}

Substituting \eqref{1.13}, \eqref{1.14} and \eqref{1.16} into \eqref{1.9}, we get
\begin{align}\label{1.17}
&\frac{\dif}{\dif t}\left\|\left(\D_hu, \D_hB, \D_hw\right)\right\|_{L^2}^2+C_0\left\|\left(\Delta_hu, \Delta_hB, \Delta_hw\right)\right\|_{L^2}^2 \nonumber\\
\leq~&C\|(u, B, w, \partial_3u, \partial_3B, \partial_3w)\|_{L^2}\|(\D_h\D u, \D_h\D B, \D_h\D w)\|_{L^2}^2.
\end{align}
for some positive constant $C_0$. Adding \eqref{1.17} to \eqref{1.7}, we get
\begin{align}\label{1.18}
&\frac{\dif}{\dif t}\left\|\left(u, B, w\right)\right\|_{H^1}^2+C_0\left\|\left(\D_hu, \D_hB, \D_hw\right)\right\|_{H^1}^2 \nonumber\\
\leq~&C\|(u, B, w, \partial_3u, \partial_3B, \partial_3w)\|_{L^2}\left\|\left(\D_hu, \D_hB, \D_hw\right)\right\|_{H^1}^2.
\end{align}
Integrating \eqref{1.18} in $(0,t)$, and taking $\varepsilon$ in \eqref{th.1} sufficiently small, we obtain after standard bootstrap argument that
\begin{align}\label{1.19}
\left\|\left(u, B, w\right)(t)\right\|_{H^1}^2+\int_0^t\left\|\left(\D_hu, \D_hB, \D_hw\right)\right\|_{H^1}^2\dif\tau\leq C\left\|\left(u_0, B_0, w_0\right)\right\|_{H^1}^2,
\end{align}
holds for any $t\geq 0$. This completes the proof.
\end{proof}

\subsection{Global existence of solutions in $H^k$}

\begin{lemma}
Given $k\geq 2$. Suppose that $(u, B, w)$ is a solution to \eqref{BMHD} with initial data $(u_0, B_0, w_0)\in H^k$ satisfying $\nabla\cdot u_0=0$, $\nabla\cdot B_0=0$ and \eqref{th.1}. Then it holds that
\begin{align}\label{2.0}
&\|(u, B, w)(t)\|_{H^k}^2+\int_0^t\|(\nabla_hu, \nabla_hB, \nabla_hw)\|_{H^k}^2\dif\tau\nonumber\\
\leq~& C\|(u_0, B_0, w_0)\|_{H^k}^2\exp\left\{C\int_0^t\|(\nabla_hu, \nabla_hB, \nabla_hw)\|_{H^{k-1}}^2\dif\tau\right\},
\end{align}
for any $t\geq 0$.
\end{lemma}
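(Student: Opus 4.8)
Denote $U=(u,B,w)$ and fix $k\ge 2$. The plan is to run an energy estimate at the top order $|\alpha|=k$ and tie it to the already-established lower-order bounds, closing everything by Gronwall's inequality. Concretely, for each multi-index $\alpha$ with $|\alpha|\le k$ I would apply $\partial^\alpha$ to \eqref{BMHD}, take the $L^2$-inner product with $(\partial^\alpha u,\partial^\alpha B,\partial^\alpha w)$, integrate by parts, and sum over $\alpha$. The dissipative terms reproduce $\mu\|\D_h\partial^\alpha u\|_{L^2}^2+\nu\|\D_h\partial^\alpha B\|_{L^2}^2+\gamma\|\D_h\partial^\alpha w\|_{L^2}^2+\chi\|\D\partial^\alpha u\|_{L^2}^2+4\chi\|\partial^\alpha w\|_{L^2}^2$, while the micro-rotation coupling $2\chi\,\D\times w$ and $2\chi\,\D\times u$ combine, after integration by parts, into $4\chi\int\partial^\alpha w\cdot\partial^\alpha(\D\times u)\dif x$ and are absorbed by $\chi\|\D\partial^\alpha u\|_{L^2}^2+4\chi\|\partial^\alpha w\|_{L^2}^2$ exactly as in \eqref{1.0}. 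After summation this yields
\begin{equation*}
\frac12\frac{\dif}{\dif t}\|U\|_{H^k}^2+C_0\|\D_hU\|_{H^k}^2\le \mathcal N,
\end{equation*}
where $\mathcal N$ collects all nonlinear contributions.

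For $\mathcal N$, the crucial observation is that after the Leibniz rule is applied to each convective and Lorentz/stretching term, the top-order piece (all derivatives on the transported field) cancels upon testing against $\partial^\alpha U$: for the convective terms this uses $\D\cdot u=0$, and for the coupled magnetic terms the same cancellation uses $\D\cdot B=0$, precisely as in \eqref{1.3} and \eqref{1.15}. What remains are commutator-type integrals $\int\partial^\beta a\cdot\D\partial^{\alpha-\beta}b\cdot\partial^\alpha c\dif x$ with $0<\beta\le\alpha$, $a\in\{u,B\}$ and $b,c\in\{u,B,w\}$, in which the derivatives are split between two factors. The strategy for each such integral is twofold: first, whenever a pure vertical derivative $\partial_3 a_3$ (which the horizontal dissipation cannot see) appears, rewrite it via $\partial_3 u_3=-\partial_1u_1-\partial_2u_2$, or the analogue for $B$, turning it into a \emph{horizontal} derivative; second, apply the anisotropic product estimates of Lemma \ref{pre.1} together with the one-dimensional interpolation of Lemma \ref{pre.2}, distributing derivatives so that the highest-order factor always carries a horizontal gradient and is therefore controlled by $\|\D_hU\|_{H^k}$.

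Carrying this out, each integral in $\mathcal N$ is bounded either by $C\|(U,\partial_3U)\|_{L^2}\|\D_hU\|_{H^k}^2$ --- the terms structurally identical to \eqref{1.13}, \eqref{1.14}, \eqref{1.16}, whose small prefactor $\le C\varepsilon$ is absorbed into the left-hand side --- or by $C\|\D_hU\|_{H^{k-1}}^2\|U\|_{H^k}^2$ after a further use of $\|\D_hU\|_{H^k}$ and Young's inequality. Choosing $\varepsilon$ small enough to absorb the first type yields
\begin{equation*}
\frac{\dif}{\dif t}\|U\|_{H^k}^2+C_0\|\D_hU\|_{H^k}^2\le C\|\D_hU\|_{H^{k-1}}^2\|U\|_{H^k}^2,
\end{equation*}
and Gronwall's inequality integrated on $(0,t)$ gives \eqref{2.0}, the bound on the dissipation integral following from integrating the differential inequality once the pointwise-in-time bound on $\|U\|_{H^k}^2$ is in hand.

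The main obstacle I anticipate is the bookkeeping in the second step: for general $k$ and a general split $\beta$, one must verify that the anisotropic inequalities can always be arranged so that (i) the top-order factor is a horizontal derivative, never an uncontrolled vertical one, and (ii) every factor to which a $\partial_3$ is assigned retains enough regularity for Lemma \ref{pre.2} to convert it into an $L^\infty_{x_3}$ bound at the cost of only $\|f\|_{L^2}^{1/2}\|\partial_3 f\|_{L^2}^{1/2}$. This forces a careful case analysis of which factor receives $\D_h$, which receives $\partial_3$, and when the divergence-free substitution is needed; the smallness of $\|(U,\partial_3U)\|_{L^2}$ is essential precisely for those borderline terms whose only available prefactor is a low-order norm rather than $\|\D_hU\|_{H^{k-1}}$.
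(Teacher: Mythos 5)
Your proposal follows essentially the same route as the paper's proof: a $k$-th order energy estimate in which the curl coupling is absorbed as in \eqref{1.0}, the top-order convective and magnetic contributions cancel via $\D\cdot u=\D\cdot B=0$, the remaining Leibniz (commutator-type) terms are handled with the anisotropic estimates of Lemma \ref{pre.1} and Lemma \ref{pre.2} after converting pure vertical derivatives of $u_3$ (or $B_3$) into horizontal ones through $\partial_3u_3=-\D_h\cdot u_h$, and the differential inequality $\frac{\dif}{\dif t}\|U\|_{H^k}^2+C_0\|\D_hU\|_{H^k}^2\leq C\|\D_hU\|_{H^{k-1}}^2\|U\|_{H^k}^2$ is closed by Gronwall. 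The only minor differences are that the paper works with the pure directional derivatives $\partial_i^k$ ($i=1,2,3$) combined with the $L^2$ estimate \eqref{1.1} rather than all multi-indices $|\alpha|\leq k$, and it bounds \emph{every} nonlinear term by $\frac{1}{5}(\text{dissipation})+C\|\D_hU\|_{H^{k-1}}^2\|U\|_{H^k}^2$, so that, unlike in your sketch, no smallness of $\|(U,\partial_3U)\|_{L^2}$ is needed to absorb any term at this level of the argument.
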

\begin{proof}
Applying $\partial_i^k$ ($i=1,2,3$) to \eqref{BMHD}, and taking the $L^2$-inner product with $(\partial_i^ku, \partial_i^kB, \partial_i^kw)$, we obtain
\begin{align}\label{2.1}
&\frac{1}{2}\frac{\dif}{\dif t}\sum\limits_{i=1}^3\left(\|\partial_i^ku\|_{L^2}^2+\|\partial_i^kB\|_{L^2}^2+\|\partial_i^kw\|_{L^2}^2\right)+\sum\limits_{i=1}^3\left(\chi\|\D\partial_i^ku\|_{L^2}^2+4\chi\|\partial_i^kw\|_{L^2}^2\right) \nonumber\\
&+\sum\limits_{i=1}^3\left(\mu\|\partial_i^k\D_hu\|_{L^2}^2+\nu\|\partial_i^k\D_hB\|_{L^2}^2+\gamma\|\partial_i^k\D_hw\|_{L^2}^2\right) \nonumber\\
\leq~&4\chi\sum\limits_{i=1}^3\int\partial_i^k\D_hw\cdot(\D\times \partial_i^ku) \dif x-\sum\limits_{i=1}^3\int\partial_i^k(u\cdot\D u)\cdot\partial_i^ku\dif x+\sum\limits_{i=1}^3\int\partial_i^k(B\cdot\D B)\cdot\partial_i^ku\dif x \nonumber\\
&-\sum\limits_{i=1}^3\int\partial_i^k(u\cdot\D B)\cdot\partial_i^kB\dif x+\sum\limits_{i=1}^3\int\partial_i^k(B\cdot\D u)\cdot\partial_i^kB\dif x-\sum\limits_{i=1}^3\int\partial_i^k(u\cdot\D w)\cdot\partial_i^kw\dif x \nonumber\\
\leq~&\sum\limits_{i=1}^3\left(\chi\|\D\partial_i^ku\|_{L^2}^2+4\chi\|\partial_i^kw\|_{L^2}^2\right)-\sum\limits_{i=1}^3\int\partial_i^k(u\cdot\D u)\cdot\partial_i^ku\dif x+\sum\limits_{i=1}^3\int\partial_i^k(B\cdot\D B)\cdot\partial_i^ku\dif x \nonumber\\
&-\sum\limits_{i=1}^3\int\partial_i^k(u\cdot\D B)\cdot\partial_i^kB\dif x+\sum\limits_{i=1}^3\int\partial_i^k(B\cdot\D u)\cdot\partial_i^kB\dif x-\sum\limits_{i=1}^3\int\partial_i^k(u\cdot\D w)\cdot\partial_i^kw\dif x \nonumber\\
:=~&\sum\limits_{i=1}^3\left(\chi\|\D\partial_i^ku\|_{L^2}^2+4\chi\|\partial_i^kw\|_{L^2}^2\right)+\sum\limits_{i=1}^5K_i.
\end{align}

Noting $\nabla\cdot u=0$, from Lemma \ref{pre.1} and Young inequality, we have
\begin{align}\label{2.2}
K_1=~&-\sum\limits_{i=1}^3\sum\limits_{j=1}^kC_k^j\int\partial_i^ju_h\cdot\D_h\partial_i^{k-j}u\cdot\partial_i^ku\dif x-\sum\limits_{i=1}^3\sum\limits_{j=1}^kC_k^j\int\partial_i^ju_3\partial_3\partial_i^{k-j}u\cdot\partial_i^ku\dif x \nonumber\\
\leq~&C\sum\limits_{i=1}^3\sum\limits_{j=1}^k\|\partial_i^ju\|_{L^2}^{\frac{1}{2}}\|\partial_1\partial_i^ju\|_{L^2}^{\frac{1}{2}}\|\D_h\partial_i^{k-j}u\|_{L^2}^{\frac{1}{2}}\|\partial_3\D_h\partial_i^{k-j}u\|_{L^2}^{\frac{1}{2}}\|\partial_i^ku\|_{L^2}^{\frac{1}{2}}\|\partial_2\partial_i^ku\|_{L^2}^{\frac{1}{2}} \nonumber\\
&+C\sum\limits_{i=1}^3\sum\limits_{j=1}^k\|\partial_i^ju_3\|_{L^2}^{\frac{1}{2}}\|\partial_3\partial_i^ju_3\|_{L^2}^{\frac{1}{2}}\|\partial_3\partial_i^{k-j}u\|_{L^2}^{\frac{1}{2}}\|\partial_2\partial_3\partial_i^{k-j}u\|_{L^2}^{\frac{1}{2}}\|\partial_i^ku\|_{L^2}^{\frac{1}{2}}\|\partial_1\partial_i^ku\|_{L^2}^{\frac{1}{2}} \nonumber\\
\leq~&C\|u\|_{H^k}\|\D_hu\|_{H^k}\|\D_hu\|_{H^{k-1}} \nonumber\\
&+C\sum\limits_{i=1}^3\sum\limits_{j=1}^k\|\partial_i^ju_3\|_{L^2}^{\frac{1}{2}}\|\partial_i^j\D_h\cdot u_h\|_{L^2}^{\frac{1}{2}}\|\partial_3\partial_i^{k-j}u\|_{L^2}^{\frac{1}{2}}\|\partial_2\partial_3\partial_i^{k-j}u\|_{L^2}^{\frac{1}{2}}\|\partial_i^ku\|_{L^2}^{\frac{1}{2}}\|\partial_1\partial_i^ku\|_{L^2}^{\frac{1}{2}} \nonumber\\
\leq~&C\|u\|_{H^k}\|\D_hu\|_{H^k}\|\D_hu\|_{H^{k-1}} \nonumber\\
\leq~&\frac{\mu}{5}\|\D_hu\|_{H^k}^2+C\|\D_hu\|_{H^{k-1}}^2\|u\|_{H^k}^2,
\end{align}
where $C_k^j$ are some positive constants that only depend of $j$ and $k$. Similarly, we have
\begin{align}\label{2.3}
K_3=~&-\sum\limits_{i=1}^3\sum\limits_{j=1}^kC_k^j\int\partial_i^ju_h\cdot\D_h\partial_i^{k-j}B\cdot\partial_i^kB\dif x-\sum\limits_{i=1}^3\sum\limits_{j=1}^kC_k^j\int\partial_i^ju_3\partial_3\partial_i^{k-j}B\cdot\partial_i^kB\dif x \nonumber\\
\leq~&C\sum\limits_{i=1}^3\sum\limits_{j=1}^k\|\partial_i^ju\|_{L^2}^{\frac{1}{2}}\|\partial_1\partial_i^ju\|_{L^2}^{\frac{1}{2}}\|\D_h\partial_i^{k-j}B\|_{L^2}^{\frac{1}{2}}\|\partial_3\D_h\partial_i^{k-j}B\|_{L^2}^{\frac{1}{2}}\|\partial_i^kB\|_{L^2}^{\frac{1}{2}}\|\partial_2\partial_i^kB\|_{L^2}^{\frac{1}{2}} \nonumber\\
&+C\sum\limits_{i=1}^3\sum\limits_{j=1}^k\|\partial_i^ju_3\|_{L^2}^{\frac{1}{2}}\|\partial_i^j\D_h\cdot u_h\|_{L^2}^{\frac{1}{2}}\|\partial_3\partial_i^{k-j}B\|_{L^2}^{\frac{1}{2}}\|\partial_2\partial_3\partial_i^{k-j}B\|_{L^2}^{\frac{1}{2}}\|\partial_i^kB\|_{L^2}^{\frac{1}{2}}\|\partial_1\partial_i^kB\|_{L^2}^{\frac{1}{2}} \nonumber\\
\leq~&\frac{1}{5}\left(\mu\|\D_hu\|_{H^k}^2+\nu\|\D_hB\|_{H^k}^2\right)+C\left(\|\D_hu\|_{H^{k-1}}^2+\|\D_hB\|_{H^{k-1}}^2\right)\left(\|u\|_{H^k}^2+\|B\|_{H^k}^2\right),
\end{align}
and
\begin{equation}\label{2.4}
K_5\leq\frac{1}{5}\left(\mu\|\D_hu\|_{H^k}^2+\gamma\|\D_hw\|_{H^k}^2\right)+C\left(\|\D_hu\|_{H^{k-1}}^2+\|\D_hw\|_{H^{k-1}}^2\right)\left(\|u\|_{H^k}^2+\|w\|_{H^k}^2\right).
\end{equation}

Noting $\nabla\cdot B=0$, we obtain after integration by parts that
\begin{align}\label{2.5}
K_2+K_4=~&\sum\limits_{i=1}^3\sum\limits_{j=1}^kC_k^j\int\partial_i^jB\cdot\D\partial_i^{k-j}B\cdot\partial_i^ku\dif x+\sum\limits_{i=1}^3\sum\limits_{j=1}^kC_k^j\int\partial_i^jB\cdot\D\partial_i^{k-j}u\cdot\partial_i^kB\dif x \nonumber\\
&+\int B\cdot\nabla\left(\partial_i^kB\cdot\partial_i^ku\right)\dif x \nonumber\\
=~&\sum\limits_{i=1}^3\sum\limits_{j=1}^kC_k^j\int\partial_i^jB_h\cdot\D_h\partial_i^{k-j}B\cdot\partial_i^ku\dif x+\sum\limits_{i=1}^3\sum\limits_{j=1}^kC_k^j\int\partial_i^jB_3\partial_3\partial_i^{k-j}B\cdot\partial_i^ku\dif x \nonumber\\
&+\sum\limits_{i=1}^3\sum\limits_{j=1}^kC_k^j\int\partial_i^jB_h\cdot\D_h\partial_i^{k-j}u\cdot\partial_i^kB\dif x+\sum\limits_{i=1}^3\sum\limits_{j=1}^kC_k^j\int\partial_i^jB_3\partial_3\partial_i^{k-j}u\cdot\partial_i^kB\dif x.
\end{align}

By Lemma \ref{pre.1}, Young inequality and the equation $\nabla\cdot B=0$, we have
\begin{align}\label{2.6}
K_2+K_4\leq~&C\|\partial_i^jB_h\|_{L^2}^{\frac{1}{2}}\|\partial_1\partial_i^jB_h\|_{L^2}^{\frac{1}{2}}\|\D_h\partial_i^{k-j}B\|_{L^2}^{\frac{1}{2}}\|\partial_3\D_h\partial_i^{k-j}B\|_{L^2}^{\frac{1}{2}}\|\partial_i^ku\|_{L^2}^{\frac{1}{2}}\|\partial_2\partial_i^ku\|_{L^2}^{\frac{1}{2}} \nonumber\\
&+C\|\partial_i^jB_3\|_{L^2}^{\frac{1}{2}}\|\partial_3\partial_i^jB_3\|_{L^2}^{\frac{1}{2}}\|\partial_3\partial_i^{k-j}B\|_{L^2}^{\frac{1}{2}}\|\partial_2\partial_3\partial_i^{k-j}B\|_{L^2}^{\frac{1}{2}}\|\partial_i^ku\|_{L^2}^{\frac{1}{2}}\|\partial_1\partial_i^ku\|_{L^2}^{\frac{1}{2}} \nonumber\\
&+C\|\partial_i^jB_h\|_{L^2}^{\frac{1}{2}}\|\partial_1\partial_i^jB_h\|_{L^2}^{\frac{1}{2}}\|\D_h\partial_i^{k-j}u\|_{L^2}^{\frac{1}{2}}\|\partial_3\D_h\partial_i^{k-j}u\|_{L^2}^{\frac{1}{2}}\|\partial_i^kB\|_{L^2}^{\frac{1}{2}}\|\partial_2\partial_i^kB\|_{L^2}^{\frac{1}{2}} \nonumber\\
&+C\|\partial_i^jB_3\|_{L^2}^{\frac{1}{2}}\|\partial_3\partial_i^jB_3\|_{L^2}^{\frac{1}{2}}\|\partial_3\partial_i^{k-j}u\|_{L^2}^{\frac{1}{2}}\|\partial_2\partial_3\partial_i^{k-j}u\|_{L^2}^{\frac{1}{2}}\|\partial_i^kB\|_{L^2}^{\frac{1}{2}}\|\partial_1\partial_i^kB\|_{L^2}^{\frac{1}{2}} \nonumber\\
\leq~&C\|(u, B)\|_{H^k}\|(\D_hu, \D_hB)\|_{H^{k-1}}\|(\D_hu, \D_hB)\|_{H^k} \nonumber\\
&+C\|\partial_i^jB_3\|_{L^2}^{\frac{1}{2}}\|\partial_i^j\D_h\cdot B_h\|_{L^2}^{\frac{1}{2}}\|\partial_3\partial_i^{k-j}B\|_{L^2}^{\frac{1}{2}}\|\partial_2\partial_3\partial_i^{k-j}B\|_{L^2}^{\frac{1}{2}}\|\partial_i^ku\|_{L^2}^{\frac{1}{2}}\|\partial_1\partial_i^ku\|_{L^2}^{\frac{1}{2}} \nonumber\\
&+C\|\partial_i^jB_3\|_{L^2}^{\frac{1}{2}}\|\partial_i^j\D_h\cdot B_h\|_{L^2}^{\frac{1}{2}}\|\partial_3\partial_i^{k-j}u\|_{L^2}^{\frac{1}{2}}\|\partial_2\partial_3\partial_i^{k-j}u\|_{L^2}^{\frac{1}{2}}\|\partial_i^kB\|_{L^2}^{\frac{1}{2}}\|\partial_1\partial_i^kB\|_{L^2}^{\frac{1}{2}} \nonumber\\
\leq~&C\|(u, B)\|_{H^k}\|(\D_hu, \D_hB)\|_{H^{k-1}}\|(\D_hu, \D_hB)\|_{H^k} \nonumber\\
\leq~&\frac{1}{5}\left(\mu\|\D_hu\|_{H^k}^2+\nu\|\D_hB\|_{H^k}^2\right)+C\left(\|\D_hu\|_{H^{k-1}}^2+\|\D_hB\|_{H^{k-1}}^2\right)\left(\|u\|_{H^k}^2+\|B\|_{H^k}^2\right).
\end{align}

Inserting \eqref{2.2}--\eqref{2.4} and \eqref{2.6} into \eqref{2.1}, then adding the resulting inequality to \eqref{1.1}, we obtain
\begin{equation}\label{2.7}
\frac{\dif}{\dif t}\|(u, B, w)\|_{H^k}^2+C_0\|(\D_hu, \D_hB, \D_hw)\|_{H^k}^2\leq C\|(\D_hu, \D_hB, \D_hw)\|_{H^{k-1}}^2\|(u, B, w)\|_{H^k}^2,
\end{equation}
for some positive constant $C_0$. By Gronwall inequality, we have proved \eqref{2.0}. This completes the proof of Theorem \ref{th}.
\end{proof}

\section{Proof of Theorem \ref{th2}}\label{Section.4}

In this section, we will establish the decay estimate of the solution under the following assumption.
\begin{equation}\label{3.0}
E_0:=\big\|\big(\Lambda_h^{-\sigma}u_0, \Lambda_h^{-\sigma}B_0, \Lambda_h^{-\sigma}w_0, \Lambda_h^{-\sigma}\partial_3u_0, \Lambda_h^{-\sigma}\partial_3B_0, \Lambda_h^{-\sigma}\partial_3w_0\big)\big\|_{L^2}^2<\infty.
\end{equation}

By the local well-posedness theory, there exists a finite time $T>0$, such that for any $t\in[0,T]$,
\begin{equation}\label{3.1}
\big\|\big(\Lambda_h^{-\sigma}u, \Lambda_h^{-\sigma}B, \Lambda_h^{-\sigma}w, \Lambda_h^{-\sigma}\partial_3u, \Lambda_h^{-\sigma}\partial_3B, \Lambda_h^{-\sigma}\partial_3w\big)(t)\big\|_{L^2}^2\leq 2E_0.
\end{equation}
If we can prove an improved estimate holds for any $t\in[0,T]$, then by the standard bootstrap argument, one finds that \eqref{3.1} holds for any $t\geq 0$. Thus our goal of this section is to prove the decay rate of the solution under the assumptions \eqref{3.0} and \eqref{3.1}, then improve \eqref{3.1} to close the estimates.

In the following lemma, we shall prove the $H^1$-decay estimates of the solution.
\begin{lemma}\label{lem.4}
Suppose the conditions of Theorem \ref{th} and Theorem \ref{th2} hold. Let $(u, B, w)$ be the global strong solution obtained in Theorem \ref{th} satisfying \eqref{3.1}. Then we have
\begin{equation}\label{4.0}
\|(u, B, w)(t)\|_{H^1}^2\leq C(1+t)^{-\sigma}.
\end{equation}
\end{lemma}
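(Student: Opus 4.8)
The plan is to establish the $H^1$-decay estimate \eqref{4.0} by combining the differential inequalities already derived in \S\ref{Section.3} with the negative-norm control provided by the ansatz \eqref{3.1}, via a Fourier-splitting argument in the spirit of Schonbek. The key point is that horizontal dissipation only yields $\|\nabla_h U\|$ in the dissipative terms, so I would interpolate the negative horizontal Sobolev norm against horizontal derivatives to produce a decay-inducing lower bound on the dissipation. Throughout I write $U=(u,B,w)$ and work on the interval $[0,T]$ where \eqref{3.1} holds.

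\emph{Step 1: the energy inequality.} First I would recall from \eqref{1.18} (and its counterpart controlling $\|U\|_{L^2}^2$) that, once $\varepsilon$ is small, the nonlinear terms are absorbed and one has a clean inequality of the form
\begin{equation*}
\frac{\dif}{\dif t}\|U\|_{H^1}^2+C_0\|\nabla_h U\|_{H^1}^2\leq 0,
\end{equation*}
valid for all $t\ge 0$ by Theorem \ref{th}. The left dissipation controls only horizontal derivatives of $U$ and of $\nabla U$; this anisotropy is exactly what must be handled.

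\emph{Step 2: Fourier splitting with the horizontal frequency.} Next I would apply the Plancherel identity in the horizontal variables and split $\mathbb{R}^3$ (in frequency) into the low-frequency region $S(t)=\{\xi:|\xi_h|^2\le g(t)\}$ and its complement, where $g(t)$ is a radius chosen of order $(1+t)^{-1}$. On the high-frequency part, $|\xi_h|^2\ge g(t)$ gives $\|\nabla_h U\|_{H^1}^2\gtrsim g(t)\,\|U\|_{H^1}^2$ up to the low-frequency contribution, turning the energy inequality into
\begin{equation*}
\frac{\dif}{\dif t}\|U\|_{H^1}^2+C_0\, g(t)\,\|U\|_{H^1}^2\le C_0\, g(t)\int_{S(t)}\big(1+|\xi|^2\big)\,|\widehat{U}(\xi)|^2\,\dif\xi.
\end{equation*}
The remaining task is to bound the low-frequency integral on the right.

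\emph{Step 3: controlling the low-frequency piece via $\dot H_h^{-\sigma}$.} Here is where the ansatz \eqref{3.1} enters. On $S(t)$ I would bound $(1+|\xi|^2)|\widehat U|^2$ using the negative horizontal norm: writing $|\widehat U|^2=|\xi_h|^{2\sigma}|\xi_h|^{-2\sigma}|\widehat U|^2$ and estimating $|\xi_h|^{2\sigma}\le g(t)^{\sigma}$ on $S(t)$, the low-frequency integral is dominated by $g(t)^{\sigma}\big(\|\Lambda_h^{-\sigma}U\|_{L^2}^2+\|\Lambda_h^{-\sigma}\nabla U\|_{L^2}^2\big)$. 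The factor $\|\Lambda_h^{-\sigma}\nabla U\|_{L^2}$ reduces to horizontal and vertical derivatives: the horizontal ones are already dissipated, while the vertical ones $\|\Lambda_h^{-\sigma}\partial_3 U\|_{L^2}$ are precisely the quantities kept bounded by $2E_0$ in \eqref{3.1}. Hence the right-hand side is bounded by $C\, g(t)^{1+\sigma}$, giving
\begin{equation*}
\frac{\dif}{\dif t}\|U\|_{H^1}^2+C_0\, g(t)\,\|U\|_{H^1}^2\le C\, g(t)^{1+\sigma}.
\end{equation*}

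\emph{Step 4: the ODE and conclusion.} Finally, with $g(t)=\tfrac{\beta}{1+t}$ for a suitably large $\beta$, this linear differential inequality integrates (using the integrating factor $(1+t)^{C_0\beta}$) to yield $\|U(t)\|_{H^1}^2\le C(1+t)^{-\sigma}$, which is \eqref{4.0}. The main obstacle I anticipate is Step 3: one must check carefully that the horizontal-derivative pieces of $\|\Lambda_h^{-\sigma}\nabla U\|_{L^2}$ appearing in the low-frequency region can either be absorbed into the dissipation or bounded by the uniform energy, and that the interpolation between $\dot H_h^{-\sigma}$ and the $H^1$-energy is consistent with the restriction $\tfrac{k-1}{2(k-2)}<\sigma<1$ so that the exponent $1+\sigma$ on the right genuinely beats the linear decay rate $g(t)$ on the left. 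Mixed horizontal/vertical frequency bookkeeping in the splitting is the delicate technical point.
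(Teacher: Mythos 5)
Your proposal is correct, but it takes a genuinely different route from the paper's. The paper stays entirely in physical space: starting from the same cleaned-up energy inequality \eqref{4.1}, it uses Gagliardo--Nirenberg interpolation in the horizontal variables together with the ansatz \eqref{3.1} to bound the energy itself by a power of the dissipation, namely $\|(u,B,w)\|_{H^1}\leq C\|(\D_hu,\D_hB,\D_hw)\|_{H^1}^{\sigma/(1+\sigma)}$ (see \eqref{4.2}--\eqref{4.3}); feeding this back into \eqref{4.1} yields the autonomous nonlinear differential inequality \eqref{4.4}, $\frac{\dif}{\dif t}\|U\|_{H^1}^2+C\big(\|U\|_{H^1}^2\big)^{(1+\sigma)/\sigma}\leq 0$, which integrates directly to the rate $(1+t)^{-\sigma}$. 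You instead run a Schonbek-type Fourier splitting over the time-dependent cylinder $\{|\xi_h|^2\leq\beta(1+t)^{-1}\}$, bound the low-frequency piece by $g(t)^{\sigma}$ times the $\dot H_h^{-\sigma}$ quantities controlled by \eqref{3.1}, and close a linear nonautonomous ODE with an integrating factor. Both arguments consume exactly the same two ingredients --- the dissipation inequality coming from \eqref{1.18} and the uniform bound \eqref{3.1} --- and your version is sound: the cylinder being unbounded in $\xi_3$ is harmless because you weight the integrand pointwise by $|\xi_h|^{-2\sigma}$ rather than using the measure of the low-frequency set (the usual $L^1$-data hypothesis of Schonbek's method is thereby replaced by \eqref{3.1}), the vertical-frequency factor $\xi_3^2|\widehat U|^2$ is exactly $|\widehat{\partial_3U}|^2$ and is covered by the $\Lambda_h^{-\sigma}\partial_3U$ bound, and the horizontal-derivative piece on the low-frequency set is dominated by $g(t)^2\|U\|_{L^2}^2\lesssim g(t)^{1+\sigma}$, so the worry you flag in Step 3 resolves easily. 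What the paper's route buys is brevity and no frequency bookkeeping: once \eqref{4.2} is in place the differential inequality is autonomous and there is no radius $\beta$ to tune. What your route buys is transparency about where the rate $\sigma$ comes from, and it sidesteps the mildly delicate point in \eqref{4.2} that the three constituents of $\|u\|_{H^1}$ carry different interpolation exponents ($\tfrac{\sigma}{1+\sigma}$ versus $\tfrac{1+\sigma}{2+\sigma}$), which the paper reconciles by using the uniform boundedness of the dissipative norm.
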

\begin{proof}
Taking $\varepsilon$ in \eqref{th.1} sufficiently small, we conclude from \eqref{1.18} that
\begin{equation}\label{4.1}
\frac{\dif}{\dif t}\left\|\left(u, B, w\right)\right\|_{H^1}^2+C_0\left\|\left(\D_hu, \D_hB, \D_hw\right)\right\|_{H^1}^2\leq 0.
\end{equation}

By \eqref{3.1}, Gagliardo-Nirenberg inequality and H\"older inequality, we have
\begin{align}\label{4.2}
\|u\|_{H^1}\leq~&C\left(\|u\|_{L^2}+\|\D_hu\|_{L^2}+\|\partial_3u\|_{L^2}\right) \nonumber\\
=~&C\left(\big\|\|u\|_{L^2_h}\big\|_{L^2_{x_3}}+\big\|\|\D_hu\|_{L^2_h}\big\|_{L^2_{x_3}}+\big\|\|\partial_3u\|_{L^2_h}\big\|_{L^2_{x_3}}\right) \nonumber\\
\leq~&C\left(\Big\|\|\Lambda_h^{-\sigma}u\|_{L^2_h}^{\frac{1}{1+\sigma}}\|\D_hu\|_{L^2_h}^{\frac{\sigma}{1+\sigma}}\Big\|_{L^2_{x_3}}+\Big\|\|\Lambda_h^{-\sigma}u\|_{L^2_h}^{\frac{1}{2+\sigma}}\|\Delta_hu\|_{L^2_h}^{\frac{1+\sigma}{2+\sigma}}\Big\|_{L^2_{x_3}}\right) \nonumber\\
&+C\Big\|\|\Lambda_h^{-\sigma}\partial_3u\|_{L^2_h}^{\frac{1}{1+\sigma}}\|\D_h\partial_3u\|_{L^2_h}^{\frac{\sigma}{1+\sigma}}\Big\|_{L^2_{x_3}} \nonumber\\
\leq~&C\left(\|\Lambda_h^{-\sigma}u\|_{L^2}^{\frac{1}{1+\sigma}}\|\D_hu\|_{L^2}^{\frac{\sigma}{1+\sigma}}+\|\Lambda_h^{-\sigma}u\|_{L^2}^{\frac{1}{2+\sigma}}\|\Delta_hu\|_{L^2}^{\frac{1+\sigma}{2+\sigma}}+\|\Lambda_h^{-\sigma}\partial_3u\|_{L^2}^{\frac{1}{1+\sigma}}\|\D_h\partial_3u\|_{L^2}^{\frac{\sigma}{1+\sigma}}\right) \nonumber\\
\leq~&C\|\D_hu\|_{H^1}^{\frac{\sigma}{1+\sigma}}.
\end{align}

Similarly, we have
\begin{equation}\label{4.3}
\|(B, w)\|_{H^1}\leq C\big\|\big(\D_hB, \D_hw\big)\big\|_{H^1}^{\frac{\sigma}{1+\sigma}}.
\end{equation}

Inserting \eqref{4.2} and \eqref{4.3} into \eqref{4.1}, we get
\begin{equation}\label{4.4}
\frac{\dif}{\dif t}\left\|\left(u, B, w\right)\right\|_{H^1}^2+C\left(\big\|\big(u, B, w\big)\big\|_{H^1}^2\right)^{\frac{1+\sigma}{\sigma}}\leq 0,
\end{equation}
based on which we are able to derive the desired estimate \eqref{4.0}.
\end{proof}

The next lemma is devoted to establishing an improved estimate of $\|(\D_hu, \D_hB, \D_hw)\|_{L^2}$.
\begin{lemma}\label{lem.5}
Suppose the conditions of Theorem \ref{th} and Theorem \ref{th2} hold. Let $(u, B, w)$ be the global strong solution obtained in Theorem \ref{th} satisfying \eqref{3.1}. Then we have
\begin{equation}\label{5.0}
\|(\D_hu, \D_hB, \D_hw)(t)\|_{L^2}^2\leq C(1+t)^{-(1+\sigma)}.
\end{equation}
\end{lemma}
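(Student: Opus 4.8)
The plan is to turn the first--order horizontal energy estimate into a self--improving nonlinear differential inequality, using the horizontal negative Sobolev norm — kept bounded by the bootstrap assumption \eqref{3.1} — as the source of the extra decay. Throughout write $U=(u,B,w)$. Note that Lemma \ref{lem.4} already delivers $\|(\D_hu,\D_hB,\D_hw)\|_{L^2}^2\leq\|U\|_{H^1}^2\leq C(1+t)^{-\sigma}$ via \eqref{4.0}, so the whole content of \eqref{5.0} is to gain one additional power of $(1+t)^{-1}$, exactly the parabolic gain expected for one extra spatial derivative.

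First I would revisit the estimate \eqref{1.17} for $\|(\D_hu,\D_hB,\D_hw)\|_{L^2}^2$. Precisely as in the passage from \eqref{1.18} to \eqref{4.1} in Lemma \ref{lem.4}, the smallness \eqref{th.2} guaranteed by Theorem \ref{th} lets me absorb the cubic right--hand side $C\|(U,\partial_3U)\|_{L^2}\|(\D_h\D u,\D_h\D B,\D_h\D w)\|_{L^2}^2$ into the dissipation, yielding
\begin{equation*}
\frac{\dif}{\dif t}\|(\D_hu,\D_hB,\D_hw)\|_{L^2}^2+C_0\|(\Delta_hu,\Delta_hB,\Delta_hw)\|_{L^2}^2\leq 0 .
\end{equation*}
Next I would establish the anisotropic interpolation inequality
\begin{equation*}
\|\D_hf\|_{L^2}\leq C\|\Lambda_h^{-\sigma}f\|_{L^2}^{\frac{1}{\sigma+2}}\|\Delta_hf\|_{L^2}^{\frac{\sigma+1}{\sigma+2}},
\end{equation*}
which is simply a Hölder inequality in the horizontal frequency $\xi_h$: on the Fourier side $\|\D_hf\|_{L^2}^2=\int|\xi_h|^2|\hat f|^2\dif\xi$, and writing $|\xi_h|^2=(|\xi_h|^{-2\sigma})^{\frac{1}{\sigma+2}}(|\xi_h|^4)^{\frac{\sigma+1}{\sigma+2}}$ and applying Hölder with conjugate exponents $\sigma+2$ and $\frac{\sigma+2}{\sigma+1}$ gives the claim after Plancherel. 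Applying this to each component of $U$ and invoking the bootstrap bound \eqref{3.1}, namely $\|\Lambda_h^{-\sigma}U\|_{L^2}\leq\sqrt{2E_0}\leq C$, I obtain
\begin{equation*}
\|(\Delta_hu,\Delta_hB,\Delta_hw)\|_{L^2}^2\geq C\big(\|(\D_hu,\D_hB,\D_hw)\|_{L^2}^2\big)^{\frac{\sigma+2}{\sigma+1}} .
\end{equation*}

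Substituting this lower bound for the dissipation and setting $y(t):=\|(\D_hu,\D_hB,\D_hw)\|_{L^2}^2$ produces the closed Bernoulli--type inequality $y'(t)+C\,y(t)^{\frac{\sigma+2}{\sigma+1}}\leq 0$. Since the exponent equals $1+\frac{1}{\sigma+1}>1$, a direct integration of $(y^{-1/(\sigma+1)})'\geq C'$ gives $y(t)\leq C(1+t)^{-(\sigma+1)}$, which is exactly \eqref{5.0}. The interpolation step uses $\|\Lambda_h^{-\sigma}U\|_{L^2}\leq C$ only through \eqref{3.1}, so the bound holds wherever \eqref{3.1} is valid and hence, once \eqref{3.1} is closed, for all $t\geq 0$.

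The main obstacle is the very first step. At the first--derivative level the dissipation genuinely available for $B$ and $w$ is only $\|\Delta_hB\|_{L^2}^2$ and $\|\Delta_hw\|_{L^2}^2$, whereas the nonlinear terms in \eqref{1.9}--\eqref{1.14} generate the full second--order horizontal gradients $\|\D_h\D B\|_{L^2}^2$ and $\|\D_h\D w\|_{L^2}^2$, which also contain the vertical pieces $\|\partial_3\D_hB\|_{L^2}^2$ and $\|\partial_3\D_hw\|_{L^2}^2$ for which there is no horizontal viscosity. These must be supplied by the companion estimate \eqref{1.7}, so that the absorption is rigorous only for the combined energy; the care needed is precisely to verify that no leftover term of the wrong order survives and that the clean inequality $\frac{\dif}{\dif t}\|\D_hU\|_{L^2}^2+C_0\|\Delta_hU\|_{L^2}^2\leq0$ is legitimate. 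The remaining points — uniformity of the interpolation constant and finiteness of $\Lambda_h^{-\sigma}U$ — are routine and are controlled by \eqref{3.0}--\eqref{3.1}.
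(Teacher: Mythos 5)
Your interpolation inequality $\|\D_hf\|_{L^2}\leq C\|\Lambda_h^{-\sigma}f\|_{L^2}^{\frac{1}{\sigma+2}}\|\Delta_hf\|_{L^2}^{\frac{\sigma+1}{\sigma+2}}$ and the Bernoulli integration are both correct, but the step on which everything hinges --- the ``clean'' inequality $\frac{\dif}{\dif t}\|\D_hU\|_{L^2}^2+C_0\|\Delta_hU\|_{L^2}^2\leq 0$ --- cannot be obtained, and the obstacle you flag at the end is fatal to this route rather than a matter of careful bookkeeping. The right-hand side of \eqref{1.17} (equivalently, of \eqref{5.1} after the paper's estimates \eqref{5.2}--\eqref{5.4}) contains the mixed second derivatives $\|\D_h\partial_3B\|_{L^2}^2$ and $\|\D_h\partial_3w\|_{L^2}^2$, and the only dissipation produced at this level of the energy estimate for $B$ and $w$ is $\nu\|\Delta_hB\|_{L^2}^2+\gamma\|\Delta_hw\|_{L^2}^2$ (the equations for $B$ and $w$ have no vertical viscosity, unlike $u$, which enjoys $\chi\|\D\D_hu\|_{L^2}^2$). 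No smallness factor lets you absorb $\|\D_h\partial_3B\|_{L^2}^2$ into $\|\Delta_hB\|_{L^2}^2$: these are independent quantities. The only source of the missing dissipation is \eqref{1.7}, but adding \eqref{1.7} forces the evolved energy to become $\|(U,\partial_3U)\|_{L^2}^2+\|\D_hU\|_{L^2}^2$, i.e.\ essentially $\|U\|_{H^1}^2$. Running your Bernoulli argument on that combined quantity, the interpolation of the $L^2$ pieces against $\Lambda_h^{-\sigma}$ gives the weaker exponent $\frac{1+\sigma}{\sigma}$, and you recover exactly \eqref{4.4} and the rate $(1+t)^{-\sigma}$ of Lemma \ref{lem.4} --- which, as you note yourself, is the rate you already have. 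Since $\|(U,\partial_3U)\|_{L^2}^2$ genuinely decays no faster than $(1+t)^{-\sigma}$ (this is the sharp rate in \eqref{th2.1}), no differential inequality for the combined energy can ever yield $(1+t)^{-(1+\sigma)}$; the extra power must come from isolating $\|\D_hU\|_{L^2}^2$, which your absorption scheme cannot do.

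The paper's mechanism is different in kind: it does not absorb the problematic terms pointwise in time at all. It keeps the nonlinearity on the right of \eqref{5.5} in its quartic form, uses the $H^1$ decay of Lemma \ref{lem.4} only to extract a prefactor $(1+t)^{-\sigma}$, and then exploits the global space-time integrability $\int_0^\infty\|(\D_hU,\D_h\partial_3U)\|_{L^2}^2\dif t<\infty$ from \eqref{1.8}: multiplying \eqref{5.5} by $(t-s)$, integrating, and taking $s=\frac{t}{2}$ turns that integrability into the preliminary rate $\|\D_hU(t)\|_{L^2}^2\leq C(1+t)^{-2\sigma}$ in \eqref{5.8}, and an iteration $a_0=2\sigma$, $a_n=\frac{1}{2}(a_{n-1}-\sigma)+2\sigma\to 3\sigma$ (with $3\sigma>1+\sigma$ because $\sigma>\frac{1}{2}$) boosts this to the sharp rate $(1+t)^{-(1+\sigma)}$. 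If you want to salvage your write-up, you would need to replace your first step by this time-weighted integration and iteration; the interpolation inequality you prove is then no longer needed.
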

\begin{proof}
Recalling \eqref{1.9}--\eqref{1.15}, we have
\begin{align}\label{5.1}
&\frac{1}{2}\frac{\dif}{\dif t}\left\|\left(\D_hu, \D_hB, \D_hw\right)\right\|_{L^2}^2+\mu\|\Delta_hu\|_{L^2}^2+\nu\|\Delta_hB\|_{L^2}^2+\gamma\|\Delta_hw\|_{L^2}^2 \nonumber\\
\leq~&\int u\cdot\D u\cdot\Delta_hu\dif x-\int B\cdot\D B\cdot\Delta_hu\dif x+\int u\cdot\D B\cdot\Delta_hB\dif x \nonumber\\
&-\int B\cdot\D u\cdot\Delta_hB\dif x+\int u\cdot\D w\cdot\Delta_hw\dif x \nonumber\\
\leq~&C\|(\D_hu, \D_hB, \D_hw)\|_{L^3}^3+C\int|\D_hu_3|\Big(|\D_hu||\partial_3u|+|\D_hB||\partial_3B|+|\D_hw||\partial_3w|\Big)\dif x \nonumber\\
&+C\int|\D_hB_3|\big(|\D_hu|+|\D_hB|\big)\big(|\partial_3u|+|\partial_3B|\big)\dif x.
\end{align}

By Lemma \ref{pre.1} and Young inequality, we have
\begin{align}\label{5.2}
&C\|(\D_hu, \D_hB, \D_hw)\|_{L^3}^3 \nonumber\\
\leq~&C\|\D_hu\|_{L^2}^{\frac{3}{2}}\|\partial_1\D_hu\|_{L^2}^{\frac{1}{2}}\|\partial_2\D_hu\|_{L^2}^{\frac{1}{2}}\|\partial_3\D_hu\|_{L^2}^{\frac{1}{2}} \nonumber\\
&+C\|\D_hB\|_{L^2}^{\frac{3}{2}}\|\partial_1\D_hB\|_{L^2}^{\frac{1}{2}}\|\partial_2\D_hB\|_{L^2}^{\frac{1}{2}}\|\partial_3\D_hB\|_{L^2}^{\frac{1}{2}} \nonumber\\
&+C\|\D_hw\|_{L^2}^{\frac{3}{2}}\|\partial_1\D_hw\|_{L^2}^{\frac{1}{2}}\|\partial_2\D_hw\|_{L^2}^{\frac{1}{2}}\|\partial_3\D_hw\|_{L^2}^{\frac{1}{2}} \nonumber\\
\leq~&C\|(\D_hu, \D_hB, \D_hw)\|_{L^2}^{\frac{3}{2}}\|(\Delta_hu, \Delta_hB, \Delta_hw)\|_{L^2}\|(\D_h\partial_3u, \D_h\partial_3B, \D_h\partial_3w)\|_{L^2}^{\frac{1}{2}} \nonumber\\
\leq~&\frac{1}{10}\left(\mu\|\Delta_hu\|_{L^2}^2+\nu\|\Delta_hB\|_{L^2}^2+\gamma\|\Delta_hw\|_{L^2}^2\right) \nonumber\\
&+C\|(\D_hu, \D_hB, \D_hw)\|_{L^2}^2\|(\D_hu, \D_hB, \D_hw, \D_h\partial_3u, \D_h\partial_3B, \D_h\partial_3w)\|_{L^2}^2,
\end{align}
\begin{align}\label{5.3}
&C\int|\D_hu_3|\Big(|\D_hu||\partial_3u|+|\D_hB||\partial_3B|+|\D_hw||\partial_3w|\Big)\dif x \nonumber\\
\leq~&C\|\D_hu_3\|_{L^2}^{\frac{1}{2}}\|\D_h\partial_3u_3\|_{L^2}^{\frac{1}{2}}\|\D_hu\|_{L^2}^{\frac{1}{2}}\|\partial_2\D_hu\|_{L^2}^{\frac{1}{2}}\|\partial_3u\|_{L^2}^{\frac{1}{2}}\|\partial_1\partial_3u\|_{L^2}^{\frac{1}{2}} \nonumber\\
&+C\|\D_hu_3\|_{L^2}^{\frac{1}{2}}\|\D_h\partial_3u_3\|_{L^2}^{\frac{1}{2}}\|\D_hB\|_{L^2}^{\frac{1}{2}}\|\partial_2\D_hB\|_{L^2}^{\frac{1}{2}}\|\partial_3B\|_{L^2}^{\frac{1}{2}}\|\partial_1\partial_3B\|_{L^2}^{\frac{1}{2}} \nonumber\\
&+C\|\D_hu_3\|_{L^2}^{\frac{1}{2}}\|\D_h\partial_3u_3\|_{L^2}^{\frac{1}{2}}\|\D_hw\|_{L^2}^{\frac{1}{2}}\|\partial_2\D_hw\|_{L^2}^{\frac{1}{2}}\|\partial_3w\|_{L^2}^{\frac{1}{2}}\|\partial_1\partial_3w\|_{L^2}^{\frac{1}{2}} \nonumber\\
\leq~& C\|\D_hu_3\|_{L^2}^{\frac{1}{2}}\|\D_h\D_h\cdot u_h\|_{L^2}^{\frac{1}{2}}\|\D_hu\|_{L^2}^{\frac{1}{2}}\|\partial_2\D_hu\|_{L^2}^{\frac{1}{2}}\|\partial_3u\|_{L^2}^{\frac{1}{2}}\|\partial_1\partial_3u\|_{L^2}^{\frac{1}{2}}\nonumber\\
&+C\|\D_hu_3\|_{L^2}^{\frac{1}{2}}\|\D_h\D_h\cdot u_h\|_{L^2}^{\frac{1}{2}}\|\D_hB\|_{L^2}^{\frac{1}{2}}\|\partial_2\D_hB\|_{L^2}^{\frac{1}{2}}\|\partial_3B\|_{L^2}^{\frac{1}{2}}\|\partial_1\partial_3B\|_{L^2}^{\frac{1}{2}} \nonumber\\
&+C\|\D_hu_3\|_{L^2}^{\frac{1}{2}}\|\D_h\D_h\cdot u_h\|_{L^2}^{\frac{1}{2}}\|\D_hw\|_{L^2}^{\frac{1}{2}}\|\partial_2\D_hw\|_{L^2}^{\frac{1}{2}}\|\partial_3w\|_{L^2}^{\frac{1}{2}}\|\partial_1\partial_3w\|_{L^2}^{\frac{1}{2}} \nonumber\\
\leq~&C\|(\D_hu, \D_hB, \D_hw)\|_{L^2}\|(\partial_3u, \partial_3B, \partial_3w)\|_{L^2}^{\frac{1}{2}}\|(\Delta_hu, \Delta_hB, \Delta_hw)\|_{L^2}\|(\D_h\partial_3u, \D_h\partial_3B, \D_h\partial_3w)\|_{L^2}^{\frac{1}{2}} \nonumber\\
\leq~&\frac{1}{10}\left(\mu\|\Delta_hu\|_{L^2}^2+\nu\|\Delta_hB\|_{L^2}^2+\gamma\|\Delta_hw\|_{L^2}^2\right) \nonumber\\
&+C\|(\D_hu, \D_hB, \D_hw)\|_{L^2}\|(\partial_3u, \partial_3B, \partial_3w)\|_{L^2}\|(\D_hu, \D_hB, \D_hw, \D_h\partial_3u, \D_h\partial_3B, \D_h\partial_3w)\|_{L^2}^2,
\end{align}
and
\begin{align}\label{5.4}
&C\int|\D_hB_3|\big(|\D_hu|+|\D_hB|\big)\big(|\partial_3u|+|\partial_3B|\big)\dif x \nonumber\\
\leq~&C\|\D_hB_3\|_{L^2}^{\frac{1}{2}}\|\D_h\partial_3B_3\|_{L^2}^{\frac{1}{2}}\|(\D_hu, \D_hB)\|_{L^2}^{\frac{1}{2}}\|(\partial_2\D_hu, \partial_2\D_hB)\|_{L^2}^{\frac{1}{2}}\|(\partial_3u, \partial_3B)\|_{L^2}^{\frac{1}{2}}\|(\partial_1\partial_3u, \partial_1\partial_3B)\|_{L^2}^{\frac{1}{2}} \nonumber\\
\leq~&C\|\D_hB_3\|_{L^2}^{\frac{1}{2}}\|\D_h\D_h\cdot B_h\|_{L^2}^{\frac{1}{2}}\|(\D_hu, \D_hB)\|_{L^2}^{\frac{1}{2}}\|(\partial_2\D_hu, \partial_2\D_hB)\|_{L^2}^{\frac{1}{2}}\|(\partial_3u, \partial_3B)\|_{L^2}^{\frac{1}{2}}\|(\partial_1\partial_3u, \partial_1\partial_3B)\|_{L^2}^{\frac{1}{2}} \nonumber\\
\leq~&C\|(\D_hu, \D_hB)\|_{L^2}\|(\partial_3u, \partial_3B)\|_{L^2}^{\frac{1}{2}}\|(\Delta_hu, \Delta_hB)\|_{L^2}\|(\D_h\partial_3u, \D_h\partial_3B)\|_{L^2}^{\frac{1}{2}} \nonumber\\
\leq~&\frac{1}{10}\left(\mu\|\Delta_hu\|_{L^2}^2+\nu\|\Delta_hB\|_{L^2}^2\right) \nonumber\\
&+C\|(\D_hu, \D_hB)\|_{L^2}\|(\partial_3u, \partial_3B)\|_{L^2}\|(\D_hu, \D_hB, \D_h\partial_3u, \D_h\partial_3B)\|_{L^2}^2.
\end{align}

Substituting \eqref{5.2}--\eqref{5.4} into \eqref{5.1} and noting \eqref{4.0}, we obtain
\begin{align}\label{5.5}
&\frac{\dif}{\dif t}\left\|\left(\D_hu, \D_hB, \D_hw\right)(t)\right\|_{L^2}^2+C_0\|(\Delta_hu, \Delta_hB, \Delta_hw)(t)\|_{L^2}^2 \nonumber\\
\leq~&C\|(\D_hu, \D_hB, \D_hw)\|_{L^2}^2\|(\D_hu, \D_hB, \D_hw, \D_h\partial_3u, \D_h\partial_3B, \D_h\partial_3w)(t)\|_{L^2}^2 \nonumber\\
&C\|(\D_hu, \D_hB, \D_hw)\|_{L^2}\|(\partial_3u, \partial_3B, \partial_3w)\|_{L^2}\|(\D_hu, \D_hB, \D_hw, \D_h\partial_3u, \D_h\partial_3B, \D_h\partial_3w)(t)\|_{L^2}^2 \nonumber\\
\leq~&C(1+t)^{-\sigma}\|(\D_hu, \D_hB, \D_hw, \D_h\partial_3u, \D_h\partial_3B, \D_h\partial_3w)(t)\|_{L^2}^2.
\end{align}

Multiplying \eqref{5.5} by $t-s$ for fixed $s\in(0,t)$, we get
\begin{align*}
&\frac{\dif}{\dif t}\left((t-s)\left\|\left(\D_hu, \D_hB, \D_hw\right)(t)\right\|_{L^2}^2\right)+C_0(t-s)\|(\Delta_hu, \Delta_hB, \Delta_hw)(t)\|_{L^2}^2 \\
\leq~&C\left\|\left(\D_hu, \D_hB, \D_hw\right)(t)\right\|_{L^2}^2+C(t-s)(1+t)^{-\sigma}\|(\D_hu, \D_hB, \D_hw, \D_h\partial_3u, \D_h\partial_3B, \D_h\partial_3w)(t)\|_{L^2}^2.
\end{align*}
Integrating the above inequality over $(s,t)$, we obtain
\begin{align}\label{5.6}
&(t-s)\left\|\left(\D_hu, \D_hB, \D_hw\right)(t)\right\|_{L^2}^2+C_0\int_s^t(\tau-s)\|(\Delta_hu, \Delta_hB, \Delta_hw)(\tau)\|_{L^2}^2\dif\tau \nonumber\\
\leq~&C\int_s^t\left\|\left(\D_hu, \D_hB, \D_hw\right)(\tau)\right\|_{L^2}^2\dif\tau \nonumber\\
&+C\int_s^t(\tau-s)(1+\tau)^{-\sigma}\|(\D_hu, \D_hB, \D_hw, \D_h\partial_3u, \D_h\partial_3B, \D_h\partial_3w)(\tau)\|_{L^2}^2\dif\tau.
\end{align}

Taking $s=\frac{t}{2}$ in \eqref{5.6} and noting \eqref{1.8}, we have
\begin{align}\label{5.7}
&\frac{t}{2}\left\|\left(\D_hu, \D_hB, \D_hw\right)(t)\right\|_{L^2}^2+C_0\int_{\frac{t}{2}}^t\Big(\tau-\frac{t}{2}\Big)\|(\Delta_hu, \Delta_hB, \Delta_hw)(\tau)\|_{L^2}^2\dif\tau \nonumber\\
\leq~&C\int_{\frac{t}{2}}^t\left\|\left(\D_hu, \D_hB, \D_hw\right)(\tau)\right\|_{L^2}^2\dif\tau \nonumber\\
&+C\int_{\frac{t}{2}}^t\Big(\tau-\frac{t}{2}\Big)(1+\tau)^{-\sigma}\|(\D_hu, \D_hB, \D_hw, \D_h\partial_3u, \D_h\partial_3B, \D_h\partial_3w)(\tau)\|_{L^2}^2\dif\tau \nonumber\\
\leq~&C\int_{\frac{t}{2}}^t\left\|\left(\D_hu, \D_hB, \D_hw\right)(\tau)\right\|_{L^2}^2\dif\tau \nonumber\\
&+C(1+t)^{1-\sigma}\int_{\frac{t}{2}}^t\|(\D_hu, \D_hB, \D_hw, \D_h\partial_3u, \D_h\partial_3B, \D_h\partial_3w)(\tau)\|_{L^2}^2\dif\tau \nonumber\\
\leq~&C\left(1+(1+t)^{1-\sigma}\right)\big\|(u, B, w, \partial_3u, \partial_3B, \partial_3w)\Big(\frac{t}{2}\Big)\big\|_{L^2}^2 \nonumber\\
\leq~&C(1+t)^{1-2\sigma},
\end{align}
which implies
\begin{equation}\label{5.8}
\left\|\left(\D_hu, \D_hB, \D_hw\right)\right\|_{L^2}^2\leq C(1+t)^{-2\sigma}.
\end{equation}

Motivated by \cite{MR4850530}, we shall implement an iterative procedure. Denote $a_0=2\sigma$. Inserting \eqref{4.0} and \eqref{5.8} into \eqref{5.5}, we obtain
\begin{align}\label{5.9}
&\frac{\dif}{\dif t}\left\|\left(\D_hu, \D_hB, \D_hw\right)(t)\right\|_{L^2}^2+C_0\|(\Delta_hu, \Delta_hB, \Delta_hw)(t)\|_{L^2}^2 \nonumber\\
\leq~&C(1+t)^{-\frac{3}{2}\sigma}\|(\D_hu, \D_hB, \D_hw, \D_h\partial_3u, \D_h\partial_3B, \D_h\partial_3w)(t)\|_{L^2}^2.
\end{align}

Similar to \eqref{5.6}--\eqref{5.7}, we have
\begin{equation}\label{5.10}
\left\|\left(\D_hu, \D_hB, \D_hw\right)\right\|_{L^2}^2\leq C(1+t)^{-(1+\sigma)}+C(1+t)^{-a_1},
\end{equation}
with $a_1=\frac{1}{2}(a_0-\sigma)+a_0$. Repeating the process for $n$ times, we get
\begin{equation}\label{5.11}
\left\|\left(\D_hu, \D_hB, \D_hw\right)\right\|_{L^2}^2\leq C(1+t)^{-(1+\sigma)}+C(1+t)^{-a_n},
\end{equation}
with
$$a_n=\frac{1}{2}(a_{n-1}-\sigma)+a_0.$$
Direct computation shows that
$$a_n=3\sigma-\frac{\sigma}{2^n}\to 3\sigma \quad \text{as}\quad n\to\infty.$$
Noting that $\frac{k-1}{2(k-2)}<\sigma<1$ for $k\geq 4$, we have $\sigma>\frac{1}{2}$ and thus $3\sigma<1+\sigma$. This implies \eqref{5.0}.
\end{proof}

At last, we will derive a uniform bound of $\|(\Lambda_h^{-\sigma}u, \Lambda_h^{-\sigma}B, \Lambda_h^{-\sigma}w, \Lambda_h^{-\sigma}\partial_3u, \Lambda_h^{-\sigma}\partial_3B, \Lambda_h^{-\sigma}\partial_3w)\|_{L^2}$.
\begin{lemma}
Suppose the conditions of Theorem \ref{th} and Theorem \ref{th2} hold. Let $(u, B, w)$ be the global strong solution obtained in Theorem \ref{th} satisfying \eqref{3.1}. Then the following estimate holds for any $t\in[0,T]$.
\begin{equation}\label{6.0}
\big\|\big(\Lambda_h^{-\sigma}u, \Lambda_h^{-\sigma}B, \Lambda_h^{-\sigma}w, \Lambda_h^{-\sigma}\partial_3u, \Lambda_h^{-\sigma}\partial_3B, \Lambda_h^{-\sigma}\partial_3w\big)(t)\big\|_{L^2}^2\leq \frac{3}{2}E_0.
\end{equation}
\end{lemma}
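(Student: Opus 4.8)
The plan is to run the energy argument behind \eqref{1.7} one more time, now at the level of the horizontally smoothed unknowns $\Lambda_h^{-\sigma}U$ and $\Lambda_h^{-\sigma}\partial_3U$ (with $U=(u,B,w)^\top$), and to use the decay already established in Lemma \ref{lem.4} and Lemma \ref{lem.5} to show that the nonlinear accumulation over $[0,T]$ stays below $\tfrac12E_0$. Concretely, I would apply $\Lambda_h^{-\sigma}$ to \eqref{BMHD} and pair in $L^2$ with $(\Lambda_h^{-\sigma}u,\Lambda_h^{-\sigma}B,\Lambda_h^{-\sigma}w)$, then apply $\Lambda_h^{-\sigma}\partial_3$ and pair with $(\Lambda_h^{-\sigma}\partial_3u,\Lambda_h^{-\sigma}\partial_3B,\Lambda_h^{-\sigma}\partial_3w)$. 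Since $\Lambda_h^{-\sigma}$ is a horizontal Fourier multiplier it commutes with $\partial_1,\partial_2,\partial_3$, and the pressure gradient drops out against the divergence-free $\Lambda_h^{-\sigma}u$, so the linear part reproduces the structure of \eqref{1.0} and \eqref{1.2}: the horizontal dissipation supplies the good terms $\|(\Lambda_h^{-\sigma}\nabla_hU,\Lambda_h^{-\sigma}\nabla_h\partial_3U)\|_{L^2}^2$, while the micropolar coupling terms $2\chi\nabla\times w$ and $2\chi\nabla\times u$ are absorbed by $\chi\|\Lambda_h^{-\sigma}\nabla u\|_{L^2}^2+4\chi\|\Lambda_h^{-\sigma}w\|_{L^2}^2$ exactly as before. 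Adding the two identities yields
\begin{equation*}
\frac{\dif}{\dif t}\big\|(\Lambda_h^{-\sigma}U,\Lambda_h^{-\sigma}\partial_3U)\big\|_{L^2}^2+C_0\big\|(\Lambda_h^{-\sigma}\nabla_hU,\Lambda_h^{-\sigma}\nabla_h\partial_3U)\big\|_{L^2}^2\le\mathcal N,
\end{equation*}
where $\mathcal N$ gathers the nonlinear contributions of $u\cdot\nabla$, the Lorentz force $B\cdot\nabla B$ and the induction term $B\cdot\nabla u$, at both the $\Lambda_h^{-\sigma}$ and $\Lambda_h^{-\sigma}\partial_3$ levels.

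The crux, and the step I expect to be hardest, is the estimate of $\mathcal N$: the integration by parts used for \eqref{1.4}--\eqref{1.6} is now unavailable, because $\Lambda_h^{-\sigma}$ is nonlocal and obeys no Leibniz rule. For each term I would apply Cauchy--Schwarz, $\big|\int\Lambda_h^{-\sigma}F\cdot\Lambda_h^{-\sigma}G\,\dif x\big|\le\|\Lambda_h^{-\sigma}F\|_{L^2}\|\Lambda_h^{-\sigma}G\|_{L^2}$, use $\nabla\cdot u=\nabla\cdot B=0$ to put the nonlinearities in conservation form, and — whenever the vertical derivative can be transferred to the test function — arrange that a factor $\|\Lambda_h^{-\sigma}\partial_3U\|_{L^2}$ or $\|\Lambda_h^{-\sigma}U\|_{L^2}$, each bounded by $\sqrt{2E_0}$ through the ansatz \eqref{3.1}, is produced. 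The companion factor $\|\Lambda_h^{-\sigma}(\text{quadratic})\|_{L^2}$ I would control slicewise in $x_3$ by the horizontal Hardy--Littlewood--Sobolev bound $\|\Lambda_h^{-\sigma}f\|_{L^2_h}\le C\|f\|_{L^{2/(1+\sigma)}_h}$, followed by H\"older in $(x_1,x_2)$ and the one-dimensional interpolation of Lemma \ref{pre.2} in $x_3$; the horizontal-divergence pieces instead pair directly against the good quantity $\|\Lambda_h^{-\sigma}\nabla_hU\|_{L^2}=\|\Lambda_h^{1-\sigma}U\|_{L^2}$ and are absorbed by Young's inequality. This reduces every contribution to the decaying norms $\|U\|_{L^2}$, $\|\partial_3U\|_{L^2}$ and $\|\nabla_hU\|_{L^2}$, which are exactly the anisotropic product inequalities alluded to in the introduction.

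With these bounds in hand, each piece of $\mathcal N$ is an integrable-in-time weight multiplied by a factor controlled through \eqref{3.1} and the uniform bound \eqref{th.2}. Feeding in $\|U\|_{H^1}^2\le C(1+\tau)^{-\sigma}$ from Lemma \ref{lem.4} and $\|\nabla_hU\|_{L^2}^2\le C(1+\tau)^{-(1+\sigma)}$ from Lemma \ref{lem.5}, the representative weight behaves like $(1+\tau)^{-(\frac12+\sigma)}$, whose exponent exceeds $1$ precisely when $\sigma>\tfrac12$ — the same threshold that was decisive in Lemma \ref{lem.5}, guaranteed here by $\frac{k-1}{2(k-2)}<\sigma<1$ with $k\ge4$. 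Hence the time integrals converge, and their size is governed by a positive power of $\varepsilon$. Integrating the differential inequality over $[0,t]$ gives
\begin{equation*}
\big\|(\Lambda_h^{-\sigma}U,\Lambda_h^{-\sigma}\partial_3U)(t)\big\|_{L^2}^2\le E_0+\int_0^t\mathcal N\,\dif\tau,
\end{equation*}
and a careful accounting of the powers of $E_0$ (which, as in the proof of Lemma \ref{lem.4}, also enter the decay constants) shows that $E_0$ factors out homogeneously, so that $\int_0^t\mathcal N\,\dif\tau\le C\varepsilon^\alpha E_0$ for some $\alpha>0$ with no smallness imposed on $E_0$. Choosing $\varepsilon$ small enough that $C\varepsilon^\alpha\le\tfrac12$ establishes \eqref{6.0} and closes the bootstrap. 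The principal obstacle throughout is the joint bookkeeping of the nonlocal horizontal smoothing and the anisotropic decay exponents: one must route each nonlinear term so that exactly one factor carries the $\Lambda_h^{-\sigma}$ controlled by \eqref{3.1}, while the remaining factors retain enough horizontal or vertical derivatives to be simultaneously estimable by Lemmas \ref{pre.1}--\ref{pre.2} and integrable in time.
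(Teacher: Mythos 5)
Your overall strategy is the same as the paper's: energy identities for $\Lambda_h^{-\sigma}U$ and $\Lambda_h^{-\sigma}\partial_3U$, absorption of the micropolar coupling exactly as in \eqref{1.0}--\eqref{1.2}, nonlinear terms estimated by Cauchy--Schwarz together with the horizontal Hardy--Littlewood--Sobolev embedding $\|\Lambda_h^{-\sigma}f\|_{L^2_h}\leq C\|f\|_{L_h^{2/(1+\sigma)}}$, H\"older in $(x_1,x_2)$, Lemma \ref{pre.2} in $x_3$, and finally integration in time of decay-weighted bounds closed by a bootstrap in $\varepsilon$. But there is a genuine gap in your treatment of the $\Lambda_h^{-\sigma}\partial_3$ level. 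Your claim that every nonlinear contribution ``reduces to the decaying norms $\|U\|_{L^2}$, $\|\partial_3U\|_{L^2}$ and $\|\nabla_hU\|_{L^2}$'' fails there. Since $\Lambda_h^{-\sigma}$ obeys no Leibniz rule (as you yourself note), the cancellation that in \eqref{1.5} disposed of $\int u\cdot\nabla\partial_3u\cdot\partial_3u\,\dif x$ is no longer available, and after Cauchy--Schwarz one is left with companion factors such as $\|\Lambda_h^{-\sigma}(u_3\partial_3^2u)\|_{L^2}$ and $\|\Lambda_h^{-\sigma}(u_h\cdot\nabla_h\partial_3u)\|_{L^2}$; running your own HLS/H\"older/Lemma \ref{pre.2} scheme on these produces the norms $\|\partial_3^2U\|_{L^2}$ and $\|\nabla_h\partial_3U\|_{L^2}$, for which there is neither decay nor smallness. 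Transferring the vertical derivative to the test function does not rescue this: it lands on $\Lambda_h^{-\sigma}\partial_3u$ and creates $\Lambda_h^{-\sigma}\partial_3^2u$, which is worse.

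The missing ingredient --- and the reason the hypotheses of Theorem \ref{th2} read as they do --- is the interpolation of these intermediate derivatives against the uniform $H^k$ bound supplied by Theorem \ref{th}, namely
\begin{equation*}
\|\partial_3^2U\|_{L^2}\leq C\|\partial_3U\|_{L^2}^{\frac{k-2}{k-1}}\|\partial_3^kU\|_{L^2}^{\frac{1}{k-1}},
\qquad
\|\nabla_h\partial_3U\|_{L^2}\leq C\|\nabla_hU\|_{L^2}^{\frac{k-2}{k-1}}\|\nabla_h\partial_3^{k-1}U\|_{L^2}^{\frac{1}{k-1}},
\end{equation*}
as in \eqref{6.8}--\eqref{6.9}. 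This trades a full decaying factor for the fractional power $\frac{k-2}{k-1}$ of it, which weakens the time decay of the integrand; as a result the binding integrability condition is not $\sigma>\frac12$, as you assert (``the same threshold that was decisive in Lemma \ref{lem.5}''), but $\sigma\cdot\frac{k-2}{k-1}>\frac12$, i.e.\ $\sigma>\frac{k-1}{2(k-2)}$, which is exactly where $k\geq4$ and the lower bound on $\sigma$ in Theorem \ref{th2} are consumed (compare the last exponent in \eqref{7.1}). A secondary weak point: your assertion that $E_0$ ``factors out homogeneously,'' so that $\int_0^t\mathcal{N}\,\dif\tau\leq C\varepsilon^{\alpha}E_0$ with $C$ independent of $E_0$, is stated without justification and is not obvious, because the constants in the decay rates of Lemmas \ref{lem.4}--\ref{lem.5} themselves depend on $E_0$ through the ansatz \eqref{3.1}; the paper instead settles for the cruder bound $E_0+C\varepsilon^{\delta}$ and then takes $\varepsilon$ small.
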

\begin{proof}
Applying $\Lambda_h^{-\sigma}$ to \eqref{BMHD}, and taking the $L^2$-inner product with $(\Lambda_h^{-\sigma}u, \Lambda_h^{-\sigma}B, \Lambda_h^{-\sigma}w)$, we obtain
\begin{align}\label{6.1}
&\frac{1}{2}\frac{\dif}{\dif t}\|(\Lambda_h^{-\sigma}u, \Lambda_h^{-\sigma}B, \Lambda_h^{-\sigma}w)\|_{L^2}^2+\mu\|\Lambda_h^{1-\sigma}u\|_{L^2}^2+\nu\|\Lambda_h^{1-\sigma}B\|_{L^2}^2+\gamma\|\Lambda_h^{1-\sigma}w\|_{L^2}^2 \nonumber\\
&+\chi\|\D\Lambda_h^{-\sigma}u\|_{L^2}^2+4\chi\|\Lambda_h^{-\sigma}w\|_{L^2}^2 \nonumber\\
\leq~&4\chi\int\Lambda_h^{-\sigma}w\cdot\Lambda_h^{-\sigma}(\D\times u) \dif x-\int\Lambda_h^{-\sigma}(u\cdot\D u)\cdot\Lambda_h^{-\sigma}u\dif x+\int\Lambda_h^{-\sigma}(B\cdot\D B)\cdot\Lambda_h^{-\sigma}u\dif x \nonumber\\
&-\int\Lambda_h^{-\sigma}(u\cdot\D B)\cdot\Lambda_h^{-\sigma}B\dif x+\int\Lambda_h^{-\sigma}(B\cdot\D u)\cdot\Lambda_h^{-\sigma}B\dif x-\int\Lambda_h^{-\sigma}(u\cdot\D w)\cdot\Lambda_h^{-\sigma}w\dif x \nonumber\\
\leq~&\chi\|\D\Lambda_h^{-\sigma}u\|_{L^2}^2+4\chi\|\Lambda_h^{-\sigma}w\|_{L^2}^2-\int\Lambda_h^{-\sigma}(u\cdot\D u)\cdot\Lambda_h^{-\sigma}u\dif x+\int\Lambda_h^{-\sigma}(B\cdot\D B)\cdot\Lambda_h^{-\sigma}u\dif x \nonumber\\
&-\int\Lambda_h^{-\sigma}(u\cdot\D B)\cdot\Lambda_h^{-\sigma}B\dif x+\int\Lambda_h^{-\sigma}(B\cdot\D u)\cdot\Lambda_h^{-\sigma}B\dif x-\int\Lambda_h^{-\sigma}(u\cdot\D w)\cdot\Lambda_h^{-\sigma}w\dif x \nonumber\\
:=~&\chi\|\D\Lambda_h^{-\sigma}u\|_{L^2}^2+4\chi\|\Lambda_h^{-\sigma}w\|_{L^2}^2+\sum\limits_{i=1}^5L_i.
\end{align}

By H\"older inequality, Sobolev inequality, Lemma \ref{pre.2} and the equation $\nabla\cdot u=0$, we have
\begin{align}\label{6.2}
L_1=~&-\int\Lambda_h^{-\sigma}(u_h\cdot\D_hu)\cdot\Lambda_h^{-\sigma}u\dif x-\int\Lambda_h^{-\sigma}(u_3\partial_3u)\cdot\Lambda_h^{-\sigma}u\dif x \nonumber\\
\leq~&C\|\Lambda_h^{-\sigma}u\|_{L^2}\Big(\big\|\Lambda_h^{-\sigma}(u_h\cdot\D_hu)\big\|_{L^2}+\big\|\Lambda_h^{-\sigma}(u_3\partial_3u)\big\|_{L^2}\Big) \nonumber\\
\leq~&C\|\Lambda_h^{-\sigma}u\|_{L^2}\Big(\Big\|\|u_h\|_{L_h^{\frac{2}{\sigma}}}\|\D_hu\|_{L_h^2}\Big\|_{L^2_{x_3}}+\big\|\|u_3\|_{L_h^{\frac{2}{\sigma}}}\|\partial_3u\|_{L_h^2}\big\|_{L^2_{x_3}}\Big) \nonumber\\
\leq~&C\|\Lambda_h^{-\sigma}u\|_{L^2}\Big(\big\|\|u_h\|_{L^\infty_{x_3}}\big\|_{L_h^{\frac{2}{\sigma}}}\|\D_hu\|_{L^2}+\big\|\|u_3\|_{L^\infty_{x_3}}\big\|_{L_h^{\frac{2}{\sigma}}}\|\partial_3u\|_{L^2}\Big) \nonumber\\
\leq~&C\|\Lambda_h^{-\sigma}u\|_{L^2}\|\D_hu\|_{L^2}\Big\|\|u_h\|_{L^2_{x_3}}^{\frac{1}{2}}\|\partial_3u_h\|_{L^2_{x_3}}^{\frac{1}{2}}\Big\|_{L_h^{\frac{2}{\sigma}}} \nonumber\\
&+C\|\Lambda_h^{-\sigma}u\|_{L^2}\|\partial_3u\|_{L^2}\Big\|\|u_3\|_{L^2_{x_3}}^{\frac{1}{2}}\|\partial_3u_3\|_{L^2_{x_3}}^{\frac{1}{2}}\Big\|_{L_h^{\frac{2}{\sigma}}} \nonumber\\
\leq~&C\|\Lambda_h^{-\sigma}u\|_{L^2}\|\D_hu\|_{L^2}\Big\|\|u_h\|_{L_h^2}^{2\sigma-1}\|\D_hu_h\|_{L_h^2}^{2-2\sigma}\Big\|_{L^2_{x_3}}^{\frac{1}{2}}\|\partial_3u_h\|_{L^2}^{\frac{1}{2}} \nonumber\\
&+C\|\Lambda_h^{-\sigma}u\|_{L^2}\|\partial_3u\|_{L^2}\Big\|\|u_3\|_{L_h^2}^{2\sigma-1}\|\D_hu_3\|_{L_h^2}^{2-2\sigma}\Big\|_{L^2_{x_3}}^{\frac{1}{2}}\|\partial_3u_3\|_{L^2}^{\frac{1}{2}} \nonumber\\
\leq~&C\|\Lambda_h^{-\sigma}u\|_{L^2}\|\D_hu\|_{L^2}\|u_h\|_{L^2}^{\sigma-\frac{1}{2}}\|\D_hu\|_{L^2}^{1-\sigma}\|\partial_3u_h\|_{L^2}^{\frac{1}{2}} \nonumber\\
&+C\|\Lambda_h^{-\sigma}u\|_{L^2}\|\partial_3u\|_{L^2}\|u_3\|_{L^2}^{\sigma-\frac{1}{2}}\|\D_hu\|_{L^2}^{1-\sigma}\|\partial_3u_3\|_{L^2}^{\frac{1}{2}}
 \nonumber\\
\leq~&C\|u\|_{L^2}^{\sigma-\frac{1}{2}}\|\Lambda_h^{-\sigma}u\|_{L^2}\Big(\|\D_hu\|_{L^2}^{2-\sigma}\|\partial_3u\|_{L^2}^{\frac{1}{2}}+\|\D_hu\|_{L^2}^{\frac{3}{2}-\sigma}\|\partial_3u\|_{L^2}\Big).
\end{align}

Similarly, we have
\begin{align}\label{6.3}
L_3+L_5\leq~&C\|\Lambda_h^{-\sigma}B\|_{L^2}\Big(\big\|\Lambda_h^{-\sigma}(u_h\cdot\D_hB)\big\|_{L^2}+\big\|\Lambda_h^{-\sigma}(u_3\partial_3B)\big\|_{L^2}\Big) \nonumber\\
&+C\|\Lambda_h^{-\sigma}w\|_{L^2}\Big(\big\|\Lambda_h^{-\sigma}(u_h\cdot\D_hw)\big\|_{L^2}+\big\|\Lambda_h^{-\sigma}(u_3\partial_3w)\big\|_{L^2}\Big) \nonumber\\
\leq~&C\|\Lambda_h^{-\sigma}B\|_{L^2}\Big(\Big\|\|u_h\|_{L_h^{\frac{2}{\sigma}}}\|\D_hB\|_{L_h^2}\Big\|_{L^2_{x_3}}+\big\|\|u_3\|_{L_h^{\frac{2}{\sigma}}}\|\partial_3B\|_{L_h^2}\big\|_{L^2_{x_3}}\Big) \nonumber\\
&+C\|\Lambda_h^{-\sigma}w\|_{L^2}\Big(\Big\|\|u_h\|_{L_h^{\frac{2}{\sigma}}}\|\D_hw\|_{L_h^2}\Big\|_{L^2_{x_3}}+\big\|\|u_3\|_{L_h^{\frac{2}{\sigma}}}\|\partial_3w\|_{L_h^2}\big\|_{L^2_{x_3}}\Big) \nonumber\\
\leq~&C\|\Lambda_h^{-\sigma}B\|_{L^2}\Big(\big\|\|u_h\|_{L^\infty_{x_3}}\big\|_{L_h^{\frac{2}{\sigma}}}\|\D_hB\|_{L^2}+\big\|\|u_3\|_{L^\infty_{x_3}}\big\|_{L_h^{\frac{2}{\sigma}}}\|\partial_3B\|_{L^2}\Big) \nonumber\\
&+C\|\Lambda_h^{-\sigma}w\|_{L^2}\Big(\big\|\|u_h\|_{L^\infty_{x_3}}\big\|_{L_h^{\frac{2}{\sigma}}}\|\D_hw\|_{L^2}+\big\|\|u_3\|_{L^\infty_{x_3}}\big\|_{L_h^{\frac{2}{\sigma}}}\|\partial_3w\|_{L^2}\Big) \nonumber\\
\leq~&C\|(\Lambda_h^{-\sigma}B, \Lambda_h^{-\sigma}w)\|_{L^2}\|(\D_hB, \D_hw)\|_{L^2}\|u_h\|_{L^2}^{\sigma-\frac{1}{2}}\|\D_hu\|_{L^2}^{1-\sigma}\|\partial_3u_h\|_{L^2}^{\frac{1}{2}} \nonumber\\
&+C\|(\Lambda_h^{-\sigma}B, \Lambda_h^{-\sigma}w)\|_{L^2}\|(\partial_3B, \partial_3w)\|_{L^2}\|u_3\|_{L^2}^{\sigma-\frac{1}{2}}\|\D_hu\|_{L^2}^{1-\sigma}\|\partial_3u_3\|_{L^2}^{\frac{1}{2}}
 \nonumber\\
\leq~&C\|u\|_{L^2}^{\sigma-\frac{1}{2}}\|(\Lambda_h^{-\sigma}B, \Lambda_h^{-\sigma}w)\|_{L^2}\|\D_hu\|_{L^2}^{1-\sigma}\|\partial_3u\|_{L^2}^{\frac{1}{2}}\|(\D_hB, \D_hw)\|_{L^2} \nonumber\\
&+C\|u\|_{L^2}^{\sigma-\frac{1}{2}}\|(\Lambda_h^{-\sigma}B, \Lambda_h^{-\sigma}w)\|_{L^2}\|\D_hu\|_{L^2}^{\frac{3}{2}-\sigma}\|(\partial_3B, \partial_3w)\|_{L^2},
\end{align}
and
\begin{align}\label{6.4}
L_2+L_4\leq~&C\|\Lambda_h^{-\sigma}u\|_{L^2}\Big(\big\|\Lambda_h^{-\sigma}(B_h\cdot\D_hB)\big\|_{L^2}+\big\|\Lambda_h^{-\sigma}(B_3\partial_3B)\big\|_{L^2}\Big) \nonumber\\
&+C\|\Lambda_h^{-\sigma}B\|_{L^2}\Big(\big\|\Lambda_h^{-\sigma}(B_h\cdot\D_hu)\big\|_{L^2}+\big\|\Lambda_h^{-\sigma}(B_3\partial_3u)\big\|_{L^2}\Big) \nonumber\\
\leq~&C\|\Lambda_h^{-\sigma}u\|_{L^2}\Big(\Big\|\|B_h\|_{L_h^{\frac{2}{\sigma}}}\|\D_hB\|_{L_h^2}\Big\|_{L^2_{x_3}}+\big\|\|B_3\|_{L_h^{\frac{2}{\sigma}}}\|\partial_3B\|_{L_h^2}\big\|_{L^2_{x_3}}\Big) \nonumber\\
&+C\|\Lambda_h^{-\sigma}B\|_{L^2}\Big(\Big\|\|B_h\|_{L_h^{\frac{2}{\sigma}}}\|\D_hu\|_{L_h^2}\Big\|_{L^2_{x_3}}+\big\|\|B_3\|_{L_h^{\frac{2}{\sigma}}}\|\partial_3u\|_{L_h^2}\big\|_{L^2_{x_3}}\Big) \nonumber\\
\leq~&C\|\Lambda_h^{-\sigma}u\|_{L^2}\Big(\big\|\|B_h\|_{L^\infty_{x_3}}\big\|_{L_h^{\frac{2}{\sigma}}}\|\D_hB\|_{L^2}+\big\|\|B_3\|_{L^\infty_{x_3}}\big\|_{L_h^{\frac{2}{\sigma}}}\|\partial_3B\|_{L^2}\Big) \nonumber\\
&+C\|\Lambda_h^{-\sigma}B\|_{L^2}\Big(\big\|\|B_h\|_{L^\infty_{x_3}}\big\|_{L_h^{\frac{2}{\sigma}}}\|\D_hu\|_{L^2}+\big\|\|B_3\|_{L^\infty_{x_3}}\big\|_{L_h^{\frac{2}{\sigma}}}\|\partial_3u\|_{L^2}\Big) \nonumber\\
\leq~&C\|(\Lambda_h^{-\sigma}u, \Lambda_h^{-\sigma}B)\|_{L^2}\|(\D_hu, \D_hB)\|_{L^2}\|B_h\|_{L^2}^{\sigma-\frac{1}{2}}\|\D_hB\|_{L^2}^{1-\sigma}\|\partial_3B_h\|_{L^2}^{\frac{1}{2}} \nonumber\\
&+C\|(\Lambda_h^{-\sigma}u, \Lambda_h^{-\sigma}B)\|_{L^2}\|(\partial_3u, \partial_3B)\|_{L^2}\|B_3\|_{L^2}^{\sigma-\frac{1}{2}}\|\D_hB\|_{L^2}^{1-\sigma}\|\partial_3B_3\|_{L^2}^{\frac{1}{2}}
 \nonumber\\
\leq~&C\|B\|_{L^2}^{\sigma-\frac{1}{2}}\|(\Lambda_h^{-\sigma}u, \Lambda_h^{-\sigma}B)\|_{L^2}\|\D_hB\|_{L^2}^{1-\sigma}\|\partial_3B\|_{L^2}^{\frac{1}{2}}\|(\D_hu, \D_hB)\|_{L^2} \nonumber\\
&+C\|B\|_{L^2}^{\sigma-\frac{1}{2}}\|(\Lambda_h^{-\sigma}u, \Lambda_h^{-\sigma}B)\|_{L^2}\|\D_hB\|_{L^2}^{\frac{3}{2}-\sigma}\|(\partial_3u, \partial_3B)\|_{L^2}.
\end{align}

Inserting \eqref{6.2}--\eqref{6.4} into \eqref{6.1}, we obtain
\begin{align}\label{6.5}
&\frac{1}{2}\frac{\dif}{\dif t}\|(\Lambda_h^{-\sigma}u, \Lambda_h^{-\sigma}B, \Lambda_h^{-\sigma}w)\|_{L^2}^2+\mu\|\Lambda_h^{1-\sigma}u\|_{L^2}^2+\nu\|\Lambda_h^{1-\sigma}B\|_{L^2}^2+\gamma\|\Lambda_h^{1-\sigma}w\|_{L^2}^2 \nonumber\\
\leq~&C\|(u, B)\|_{L^2}^{\sigma-\frac{1}{2}}\|(\Lambda_h^{-\sigma}u, \Lambda_h^{-\sigma}B, \Lambda_h^{-\sigma}w)\|_{L^2}\|(\D_hu, \D_hB)\|_{L^2}^{1-\sigma}\|(\partial_3u, \partial_3B)\|_{L^2}^{\frac{1}{2}}\|(\D_hu, \D_hB, \D_hw)\|_{L^2} \nonumber\\
&+C\|(u, B)\|_{L^2}^{\sigma-\frac{1}{2}}\|(\Lambda_h^{-\sigma}u, \Lambda_h^{-\sigma}B, \Lambda_h^{-\sigma}w)\|_{L^2}\|(\D_hu, \D_hB)\|_{L^2}^{\frac{3}{2}-\sigma}\|(\partial_3u, \partial_3B, \partial_3w)\|_{L^2}.
\end{align}
For simplicity, we denote
\begin{equation}\label{UV}
U=(u, B, w), \quad V=(u, B).
\end{equation}
Then \eqref{6.5} can be rewritten as
\begin{align}\label{6.5'}
&\frac{\dif}{\dif t}\|\Lambda_h^{-\sigma}U\|_{L^2}^2+C_0\|\Lambda_h^{1-\sigma}U\|_{L^2}^2 \nonumber\\
\leq~&C\|V\|_{L^2}^{\sigma-\frac{1}{2}}\|\Lambda_h^{-\sigma}U\|_{L^2}\Big(\|\D_hV\|_{L^2}^{1-\sigma}\|\partial_3V\|_{L^2}^{\frac{1}{2}}\|\D_hU\|_{L^2}+\|\D_hV\|_{L^2}^{\frac{3}{2}-\sigma}\|\partial_3U\|_{L^2}\Big).
\end{align}

Applying $\Lambda_h^{-\sigma}\partial_3$ to \eqref{BMHD}, and taking the $L^2$-inner product with $(\Lambda_h^{-\sigma}\partial_3u, \Lambda_h^{-\sigma}\partial_3B, \Lambda_h^{-\sigma}\partial_3w)$, we obtain
\begin{align}\label{6.6}
&\frac{1}{2}\frac{\dif}{\dif t}\|(\Lambda_h^{-\sigma}\partial_3u, \Lambda_h^{-\sigma}\partial_3B, \Lambda_h^{-\sigma}\partial_3w)\|_{L^2}^2+\mu\|\Lambda_h^{1-\sigma}\partial_3u\|_{L^2}^2+\nu\|\Lambda_h^{1-\sigma}\partial_3B\|_{L^2}^2+\gamma\|\Lambda_h^{1-\sigma}\partial_3w\|_{L^2}^2 \nonumber\\
&+\chi\|\D\Lambda_h^{-\sigma}\partial_3u\|_{L^2}^2+4\chi\|\Lambda_h^{-\sigma}\partial_3w\|_{L^2}^2 \nonumber\\
\leq~&4\chi\int\Lambda_h^{-\sigma}\partial_3w\cdot\Lambda_h^{-\sigma}\partial_3(\D\times u) \dif x-\int\Lambda_h^{-\sigma}\partial_3(u\cdot\D u)\cdot\Lambda_h^{-\sigma}u\dif x+\int\Lambda_h^{-\sigma}\partial_3(B\cdot\D B)\cdot\Lambda_h^{-\sigma}u\dif x \nonumber\\
&-\int\Lambda_h^{-\sigma}\partial_3(u\cdot\D B)\cdot\Lambda_h^{-\sigma}B\dif x+\int\Lambda_h^{-\sigma}\partial_3(B\cdot\D u)\cdot\Lambda_h^{-\sigma}B\dif x-\int\Lambda_h^{-\sigma}\partial_3(u\cdot\D w)\cdot\Lambda_h^{-\sigma}w\dif x \nonumber\\
\leq~&\chi\|\D\Lambda_h^{-\sigma}\partial_3u\|_{L^2}^2+4\chi\|\Lambda_h^{-\sigma}\partial_3w\|_{L^2}^2-\int\Lambda_h^{-\sigma}\partial_3(u\cdot\D u)\cdot\Lambda_h^{-\sigma}\partial_3u\dif x+\int\Lambda_h^{-\sigma}\partial_3(B\cdot\D B)\cdot\Lambda_h^{-\sigma}\partial_3u\dif x \nonumber\\
&-\int\Lambda_h^{-\sigma}\partial_3(u\cdot\D B)\cdot\Lambda_h^{-\sigma}\partial_3B\dif x+\int\Lambda_h^{-\sigma}\partial_3(B\cdot\D u)\cdot\Lambda_h^{-\sigma}\partial_3B\dif x-\int\Lambda_h^{-\sigma}\partial_3(u\cdot\D w)\cdot\Lambda_h^{-\sigma}\partial_3w\dif x \nonumber\\
:=~&\chi\|\D\Lambda_h^{-\sigma}\partial_3u\|_{L^2}^2+4\chi\|\Lambda_h^{-\sigma}\partial_3w\|_{L^2}^2+\sum\limits_{i=1}^5M_i.
\end{align}

Noting that $\nabla\cdot u=0$, we have
\begin{align}\label{6.7}
M_1=~&-\int\Lambda_h^{-\sigma}(\partial_3u\cdot\D u)\cdot\Lambda_h^{-\sigma}\partial_3u\dif x-\int\Lambda_h^{-\sigma}(u\cdot\D\partial_3 u)\cdot\Lambda_h^{-\sigma}\partial_3u\dif x \nonumber\\
=~&-\int\Lambda_h^{-\sigma}(\partial_3u_h\cdot\D_h u)\cdot\Lambda_h^{-\sigma}\partial_3u\dif x-\int\Lambda_h^{-\sigma}(\partial_3u_3\partial_3 u)\cdot\Lambda_h^{-\sigma}\partial_3u\dif x \nonumber\\
&-\int\Lambda_h^{-\sigma}(u_h\cdot\D_h\partial_3 u)\cdot\Lambda_h^{-\sigma}\partial_3u\dif x-\int\Lambda_h^{-\sigma}(u_3\partial_3^2 u)\cdot\Lambda_h^{-\sigma}\partial_3u\dif x \nonumber\\
=~&-\int\Lambda_h^{-\sigma}(\partial_3u_h\cdot\D_h u)\cdot\Lambda_h^{-\sigma}\partial_3u\dif x+\int\Lambda_h^{-\sigma}(\D_h\cdot u_h\partial_3 u)\cdot\Lambda_h^{-\sigma}\partial_3u\dif x \nonumber\\
&-\int\Lambda_h^{-\sigma}(u_h\cdot\D_h\partial_3 u)\cdot\Lambda_h^{-\sigma}\partial_3u\dif x-\int\Lambda_h^{-\sigma}(u_3\partial_3^2 u)\cdot\Lambda_h^{-\sigma}\partial_3u\dif x \nonumber\\
:=~&\sum\limits_{j=1}^4M_{1,j}.
\end{align}

Similar to \eqref{6.2}, we have
\begin{align}\label{6.8}
M_{1,1}+M_{1,2}\leq~&C\|\Lambda_h^{-\sigma}\partial_3u\|_{L^2}\Big(\big\|\Lambda_h^{-\sigma}(\partial_3u_h\cdot\D_h u)\big\|_{L^2}+\big\|\Lambda_h^{-\sigma}(\partial_3 u\D_h\cdot u_h)\big\|_{L^2}\Big) \nonumber\\
\leq~&C\|\Lambda_h^{-\sigma}\partial_3u\|_{L^2}\|\partial_3u\|_{L^2}^{\sigma-\frac{1}{2}}\|\D_h\partial_3u_h\|_{L^2}^{1-\sigma}\|\partial_3^2u\|_{L^2}^{\frac{1}{2}}\|\D_hu\|_{L^2} \nonumber\\
\leq~&C\|\Lambda_h^{-\sigma}\partial_3u\|_{L^2}\|\partial_3u\|_{L^2}^{\sigma-\frac{1}{2(k-1)}}\|\D_hu\|_{L^2}^{1+\frac{(k-2)(1-\sigma)}{k-1}}\|\D_h\partial_3^{k-1}u\|_{L^2}^{\frac{1-\sigma}{k-1}}\|\partial_3^ku\|_{L^2}^{\frac{1}{2(k-1)}},
\end{align}
and
\begin{align}\label{6.9}
M_{1,3}+M_{1,4}\leq~&C\|\Lambda_h^{-\sigma}\partial_3u\|_{L^2}\Big(\big\|\Lambda_h^{-\sigma}(u_h\cdot\D_h\partial_3 u)\big\|_{L^2}+\big\|\Lambda_h^{-\sigma}(u_3\partial_3^2 u)\big\|_{L^2}\Big) \nonumber\\
\leq~&C\|\Lambda_h^{-\sigma}\partial_3u\|_{L^2}\|u\|_{L^2}^{\sigma-\frac{1}{2}}\|\D_hu\|_{L^2}^{1-\sigma}\Big(\|\partial_3u\|_{L^2}^{\frac{1}{2}}\|\D_h\partial_3u\|_{L^2}+\|\partial_3u_3\|_{L^2}^{\frac{1}{2}}\|\partial_3^2u\|_{L^2}\Big) \nonumber\\
\leq~&C\|\Lambda_h^{-\sigma}\partial_3u\|_{L^2}\|u\|_{L^2}^{\sigma-\frac{1}{2}}\|\D_hu\|_{L^2}^{1-\sigma}\|\partial_3u\|_{L^2}^{\frac{1}{2}}\|\D_hu\|_{L^2}^{\frac{k-2}{k-1}}\|\D_h\partial_3^{k-1}u\|_{L^2}^{\frac{1}{k-1}} \nonumber\\
&+C\|\Lambda_h^{-\sigma}\partial_3u\|_{L^2}\|u\|_{L^2}^{\sigma-\frac{1}{2}}\|\D_hu\|_{L^2}^{\frac{3}{2}-\sigma}\|\partial_3u\|_{L^2}^{\frac{k-2}{k-1}}\|\partial_3^ku\|_{L^2}^{\frac{1}{k-1}},
\end{align}
where we have used the following estimates that come from Gagliardo-Nirenberg inequality,
\begin{align*}
\|\D_h\partial_3u\|_{L^2}\leq~&C\|\D_hu\|_{L^2}^{\frac{k-2}{k-1}}\|\D_h\partial_3^{k-1}u\|_{L^2}^{\frac{1}{k-1}}, \\
\|\partial_3^2u\|_{L^2}\leq~&C\|\partial_3u\|_{L^2}^{\frac{k-2}{k-1}}\|\partial_3^ku\|_{L^2}^{\frac{1}{k-1}}.
\end{align*}
Inserting \eqref{6.8} and \eqref{6.9} into \eqref{6.7}, we obtain
\begin{align}\label{6.10}
M_1\leq~&C\|\Lambda_h^{-\sigma}\partial_3u\|_{L^2}\|\partial_3u\|_{L^2}^{\sigma-\frac{1}{2(k-1)}}\|\D_hu\|_{L^2}^{1+\frac{(k-2)(1-\sigma)}{k-1}}\|\D_h\partial_3^{k-1}u\|_{L^2}^{\frac{1-\sigma}{k-1}}\|\partial_3^ku\|_{L^2}^{\frac{1}{2(k-1)}} \nonumber\\
&+C\|\Lambda_h^{-\sigma}\partial_3u\|_{L^2}\|u\|_{L^2}^{\sigma-\frac{1}{2}}\|\D_hu\|_{L^2}^{1-\sigma}\|\partial_3u\|_{L^2}^{\frac{1}{2}}\|\D_hu\|_{L^2}^{\frac{k-2}{k-1}}\|\D_h\partial_3^{k-1}u\|_{L^2}^{\frac{1}{k-1}} \nonumber\\
&+C\|\Lambda_h^{-\sigma}\partial_3u\|_{L^2}\|u\|_{L^2}^{\sigma-\frac{1}{2}}\|\D_hu\|_{L^2}^{\frac{3}{2}-\sigma}\|\partial_3u\|_{L^2}^{\frac{k-2}{k-1}}\|\partial_3^ku\|_{L^2}^{\frac{1}{k-1}}.
\end{align}

Similar to \eqref{6.7}--\eqref{6.10}, we have
\begin{align}\label{6.11}
M_3+M_5\leq~&C\|\Lambda_h^{-\sigma}\partial_3B\|_{L^2}\Big(\big\|\Lambda_h^{-\sigma}(\partial_3u_h\cdot\D_h B)\big\|_{L^2}+\big\|\Lambda_h^{-\sigma}(\partial_3 B\D_h\cdot u_h)\big\|_{L^2}\Big) \nonumber\\
&+C\|\Lambda_h^{-\sigma}\partial_3B\|_{L^2}\Big(\big\|\Lambda_h^{-\sigma}(u_h\cdot\D_h\partial_3 B)\big\|_{L^2}+\big\|\Lambda_h^{-\sigma}(u_3\partial_3^2 B)\big\|_{L^2}\Big) 
\nonumber\\
&+C\|\Lambda_h^{-\sigma}\partial_3w\|_{L^2}\Big(\big\|\Lambda_h^{-\sigma}(\partial_3u_h\cdot\D_h w)\big\|_{L^2}+\big\|\Lambda_h^{-\sigma}(\partial_3 w\D_h\cdot u_h)\big\|_{L^2}\Big) \nonumber\\
&+C\|\Lambda_h^{-\sigma}\partial_3w\|_{L^2}\Big(\big\|\Lambda_h^{-\sigma}(u_h\cdot\D_h\partial_3 w)\big\|_{L^2}+\big\|\Lambda_h^{-\sigma}(u_3\partial_3^2 w)\big\|_{L^2}\Big) \nonumber\\
\leq~&C\|\Lambda_h^{-\sigma}\partial_3B\|_{L^2}\|\partial_3u\|_{L^2}^{\sigma-\frac{1}{2(k-1)}}\|\D_hu\|_{L^2}^{\frac{(k-2)(1-\sigma)}{k-1}}\|\D_h\partial_3^{k-1}u\|_{L^2}^{\frac{1-\sigma}{k-1}}\|\partial_3^ku\|_{L^2}^{\frac{1}{2(k-1)}}\|\D_hB\|_{L^2} \nonumber\\
&+C\|\Lambda_h^{-\sigma}\partial_3B\|_{L^2}\|\partial_3B\|_{L^2}^{\sigma-\frac{1}{2(k-1)}}\|\D_hB\|_{L^2}^{\frac{(k-2)(1-\sigma)}{k-1}}\|\D_h\partial_3^{k-1}B\|_{L^2}^{\frac{1-\sigma}{k-1}}\|\partial_3^kB\|_{L^2}^{\frac{1}{2(k-1)}}\|\D_hu\|_{L^2} \nonumber\\
&+C\|\Lambda_h^{-\sigma}\partial_3B\|_{L^2}\|u\|_{L^2}^{\sigma-\frac{1}{2}}\|\D_hu\|_{L^2}^{1-\sigma}\|\partial_3u\|_{L^2}^{\frac{1}{2}}\|\D_hB\|_{L^2}^{\frac{k-2}{k-1}}\|\D_h\partial_3^{k-1}B\|_{L^2}^{\frac{1}{k-1}} \nonumber\\
&+C\|\Lambda_h^{-\sigma}\partial_3B\|_{L^2}\|u\|_{L^2}^{\sigma-\frac{1}{2}}\|\D_hu\|_{L^2}^{\frac{3}{2}-\sigma}\|\partial_3B\|_{L^2}^{\frac{k-2}{k-1}}\|\partial_3^kB\|_{L^2}^{\frac{1}{k-1}} \nonumber\\
&+C\|\Lambda_h^{-\sigma}\partial_3w\|_{L^2}\|\partial_3u\|_{L^2}^{\sigma-\frac{1}{2(k-1)}}\|\D_hu\|_{L^2}^{\frac{(k-2)(1-\sigma)}{k-1}}\|\D_h\partial_3^{k-1}u\|_{L^2}^{\frac{1-\sigma}{k-1}}\|\partial_3^ku\|_{L^2}^{\frac{1}{2(k-1)}}\|\D_hw\|_{L^2} \nonumber\\
&+C\|\Lambda_h^{-\sigma}\partial_3w\|_{L^2}\|\partial_3w\|_{L^2}^{\sigma-\frac{1}{2(k-1)}}\|\D_hw\|_{L^2}^{\frac{(k-2)(1-\sigma)}{k-1}}\|\D_h\partial_3^{k-1}w\|_{L^2}^{\frac{1-\sigma}{k-1}}\|\partial_3^kw\|_{L^2}^{\frac{1}{2(k-1)}}\|\D_hu\|_{L^2} \nonumber\\
&+C\|\Lambda_h^{-\sigma}\partial_3w\|_{L^2}\|u\|_{L^2}^{\sigma-\frac{1}{2}}\|\D_hu\|_{L^2}^{1-\sigma}\|\partial_3u\|_{L^2}^{\frac{1}{2}}\|\D_hw\|_{L^2}^{\frac{k-2}{k-1}}\|\D_h\partial_3^{k-1}w\|_{L^2}^{\frac{1}{k-1}} \nonumber\\
&+C\|\Lambda_h^{-\sigma}\partial_3w\|_{L^2}\|u\|_{L^2}^{\sigma-\frac{1}{2}}\|\D_hu\|_{L^2}^{\frac{3}{2}-\sigma}\|\partial_3w\|_{L^2}^{\frac{k-2}{k-1}}\|\partial_3^kw\|_{L^2}^{\frac{1}{k-1}}.
\end{align}

Noting that $\nabla\cdot B=0$, similar to \eqref{6.7}--\eqref{6.10}, we have
\begin{align}\label{6.12}
M_2=~&\int\Lambda_h^{-\sigma}(\partial_3B\cdot\D B)\cdot\Lambda_h^{-\sigma}\partial_3u\dif x+\int\Lambda_h^{-\sigma}(B\cdot\D\partial_3 B)\cdot\Lambda_h^{-\sigma}\partial_3u\dif x \nonumber\\
=~&\int\Lambda_h^{-\sigma}(\partial_3B_h\cdot\D_h B)\cdot\Lambda_h^{-\sigma}\partial_3u\dif x+\int\Lambda_h^{-\sigma}(\partial_3B_3\partial_3 B)\cdot\Lambda_h^{-\sigma}\partial_3u\dif x \nonumber\\
&+\int\Lambda_h^{-\sigma}(B_h\cdot\D_h\partial_3 B)\cdot\Lambda_h^{-\sigma}\partial_3u\dif x+\int\Lambda_h^{-\sigma}(B_3\partial_3^2 B)\cdot\Lambda_h^{-\sigma}\partial_3u\dif x \nonumber\\
=~&\int\Lambda_h^{-\sigma}(\partial_3B_h\cdot\D_h B)\cdot\Lambda_h^{-\sigma}\partial_3u\dif x-\int\Lambda_h^{-\sigma}(\D_h\cdot B_h\partial_3 B)\cdot\Lambda_h^{-\sigma}\partial_3u\dif x \nonumber\\
&+\int\Lambda_h^{-\sigma}(B_h\cdot\D_h\partial_3 B)\cdot\Lambda_h^{-\sigma}\partial_3u\dif x+\int\Lambda_h^{-\sigma}(B_3\partial_3^2 B)\cdot\Lambda_h^{-\sigma}\partial_3u\dif x \nonumber\\
\leq~&C\|\Lambda_h^{-\sigma}\partial_3u\|_{L^2}\|\partial_3B\|_{L^2}^{\sigma-\frac{1}{2(k-1)}}\|\D_hB\|_{L^2}^{1+\frac{(k-2)(1-\sigma)}{k-1}}\|\D_h\partial_3^{k-1}B\|_{L^2}^{\frac{1-\sigma}{k-1}}\|\partial_3^kB\|_{L^2}^{\frac{1}{2(k-1)}} \nonumber\\
&+C\|\Lambda_h^{-\sigma}\partial_3u\|_{L^2}\|B\|_{L^2}^{\sigma-\frac{1}{2}}\|\D_hB\|_{L^2}^{1-\sigma}\|\partial_3B\|_{L^2}^{\frac{1}{2}}\|\D_hB\|_{L^2}^{\frac{k-2}{k-1}}\|\D_h\partial_3^{k-1}B\|_{L^2}^{\frac{1}{k-1}} \nonumber\\
&+C\|\Lambda_h^{-\sigma}\partial_3u\|_{L^2}\|B\|_{L^2}^{\sigma-\frac{1}{2}}\|\D_hB\|_{L^2}^{\frac{3}{2}-\sigma}\|\partial_3B\|_{L^2}^{\frac{k-2}{k-1}}\|\partial_3^kB\|_{L^2}^{\frac{1}{k-1}},
\end{align}
and
\begin{align}\label{6.13}
M_4=~&\int\Lambda_h^{-\sigma}(\partial_3B\cdot\D u)\cdot\Lambda_h^{-\sigma}\partial_3B\dif x+\int\Lambda_h^{-\sigma}(B\cdot\D\partial_3 u)\cdot\Lambda_h^{-\sigma}\partial_3B\dif x \nonumber\\
=~&\int\Lambda_h^{-\sigma}(\partial_3B_h\cdot\D_h u)\cdot\Lambda_h^{-\sigma}\partial_3B\dif x+\int\Lambda_h^{-\sigma}(\partial_3B_3\partial_3 u)\cdot\Lambda_h^{-\sigma}\partial_3B\dif x \nonumber\\
&+\int\Lambda_h^{-\sigma}(B_h\cdot\D_h\partial_3 u)\cdot\Lambda_h^{-\sigma}\partial_3B\dif x+\int\Lambda_h^{-\sigma}(B_3\partial_3^2 u)\cdot\Lambda_h^{-\sigma}\partial_3B\dif x \nonumber\\
=~&\int\Lambda_h^{-\sigma}(\partial_3B_h\cdot\D_h u)\cdot\Lambda_h^{-\sigma}\partial_3B\dif x-\int\Lambda_h^{-\sigma}(\D_h\cdot B_h\partial_3 u)\cdot\Lambda_h^{-\sigma}\partial_3B\dif x \nonumber\\
&+\int\Lambda_h^{-\sigma}(B_h\cdot\D_h\partial_3 u)\cdot\Lambda_h^{-\sigma}\partial_3B\dif x+\int\Lambda_h^{-\sigma}(B_3\partial_3^2 u)\cdot\Lambda_h^{-\sigma}\partial_3B\dif x \nonumber\\
\leq~&C\|\Lambda_h^{-\sigma}\partial_3B\|_{L^2}\|\partial_3B\|_{L^2}^{\sigma-\frac{1}{2(k-1)}}\|\D_hB\|_{L^2}^{\frac{(k-2)(1-\sigma)}{k-1}}\|\D_h\partial_3^{k-1}B\|_{L^2}^{\frac{1-\sigma}{k-1}}\|\partial_3^kB\|_{L^2}^{\frac{1}{2(k-1)}}\|\D_hu\|_{L^2} \nonumber\\
&+C\|\Lambda_h^{-\sigma}\partial_3B\|_{L^2}\|\partial_3u\|_{L^2}^{\sigma-\frac{1}{2(k-1)}}\|\D_hu\|_{L^2}^{\frac{(k-2)(1-\sigma)}{k-1}}\|\D_h\partial_3^{k-1}u\|_{L^2}^{\frac{1-\sigma}{k-1}}\|\partial_3^ku\|_{L^2}^{\frac{1}{2(k-1)}}\|\D_hB\|_{L^2} \nonumber\\
&+C\|\Lambda_h^{-\sigma}\partial_3B\|_{L^2}\|B\|_{L^2}^{\sigma-\frac{1}{2}}\|\D_hB\|_{L^2}^{1-\sigma}\|\partial_3B\|_{L^2}^{\frac{1}{2}}\|\D_hu\|_{L^2}^{\frac{k-2}{k-1}}\|\D_h\partial_3^{k-1}u\|_{L^2}^{\frac{1}{k-1}} \nonumber\\
&+C\|\Lambda_h^{-\sigma}\partial_3B\|_{L^2}\|B\|_{L^2}^{\sigma-\frac{1}{2}}\|\D_hB\|_{L^2}^{\frac{3}{2}-\sigma}\|\partial_3u\|_{L^2}^{\frac{k-2}{k-1}}\|\partial_3^ku\|_{L^2}^{\frac{1}{k-1}}.
\end{align}

Substituting \eqref{6.10}--\eqref{6.13} into \eqref{6.6} and recalling \eqref{UV}, we obtain
\begin{align}\label{6.14}
&\frac{\dif}{\dif t}\|\Lambda_h^{-\sigma}\partial_3U\|_{L^2}^2+C_0\|\Lambda_h^{1-\sigma}\partial_3U\|_{L^2}^2 \nonumber\\
\leq~&C\|\Lambda_h^{-\sigma}\partial_3U\|_{L^2}\|\partial_3U\|_{L^2}^{\sigma-\frac{1}{2(k-1)}}\|\D_hU\|_{L^2}^{\frac{(k-2)(1-\sigma)}{k-1}}\|\D_h\partial_3^{k-1}U\|_{L^2}^{\frac{1-\sigma}{k-1}}\|\partial_3^kU\|_{L^2}^{\frac{1}{2(k-1)}}\|\D_hU\|_{L^2} \nonumber\\
&+C\|\Lambda_h^{-\sigma}\partial_3U\|_{L^2}\|V\|_{L^2}^{\sigma-\frac{1}{2}}\|\D_hV\|_{L^2}^{1-\sigma}\|\partial_3V\|_{L^2}^{\frac{1}{2}}\|\D_hU\|_{L^2}^{\frac{k-2}{k-1}}\|\D_h\partial_3^{k-1}U\|_{L^2}^{\frac{1}{k-1}} \nonumber\\
&+C\|\Lambda_h^{-\sigma}\partial_3U\|_{L^2}\|V\|_{L^2}^{\sigma-\frac{1}{2}}\|\D_hV\|_{L^2}^{\frac{3}{2}-\sigma}\|\partial_3U\|_{L^2}^{\frac{k-2}{k-1}}\|\partial_3^kU\|_{L^2}^{\frac{1}{k-1}}.
\end{align}

By Theorem \ref{th}, Lemma \ref{lem.4}, Lemma \ref{lem.5}, and \eqref{3.1}, it holds that
\begin{align}\label{7.0}
\|\Lambda_h^{-\sigma}U\|_{L^2}+\|\Lambda_h^{-\sigma}\partial_3U\|_{L^2}+\|U\|_{H^k}\leq~& C, \nonumber\\
\|U\|_{L^2}+\|\partial_3U\|_{L^2}\leq~&C\varepsilon, \nonumber\\
\|U\|_{L^2}+\|\partial_3U\|_{L^2}\leq~&C(1+t)^{-\frac{\sigma}{2}}, \nonumber\\
\|\D_hU\|_{L^2}\leq~&C(1+t)^{-\frac{1+\sigma}{2}}.
\end{align}
Adding \eqref{6.5'} to \eqref{6.14} and integrating the resulting inequality on $(0,t)$ for some $t\in(0,T]$, we obtain
\begin{align}\label{7.1}
&\|\Lambda_h^{-\sigma}U\|_{L^2}^2+\|\Lambda_h^{-\sigma}\partial_3U\|_{L^2}^2 \nonumber\\
\leq~&\|\Lambda_h^{-\sigma}U(0)\|_{L^2}^2+\|\Lambda_h^{-\sigma}\partial_3U(0)\|_{L^2}^2+C\varepsilon^\delta\int_0^t(1+\tau)^{-\frac{3}{4}-\frac{\sigma}{2}(1-\delta)}\dif\tau \nonumber\\
&+C\varepsilon^\delta\int_0^t(1+\tau)^{-1-\frac{\sigma}{2}(1-\delta)-\frac{2\sigma^2-\sigma-2}{4(k-1)}}\dif\tau+C\varepsilon^\delta\int_0^t(1+\tau)^{-1-\frac{\sigma}{2}(1-\delta)-\frac{1+\sigma}{2(k-1)}}\dif\tau \nonumber\\
&+C\varepsilon^\delta\int_0^t(1+\tau)^{-\frac{3}{4}-\frac{\sigma}{2}(1-\delta)+\frac{\sigma}{2(k-1)}}\dif\tau,
\end{align}
for fixed $\delta>0$. Noting that $\frac{k-1}{2(k-2)}<\sigma<1$ for some $k\geq 4$, we can choose $\delta$ sufficiently small, such that
\begin{align*}
-\frac{3}{4}-\frac{\sigma}{2}(1-\delta)<-1, \quad & \quad
-1-\frac{\sigma}{2}(1-\delta)-\frac{2\sigma^2-\sigma-2}{4(k-1)}<-1, \\
-1-\frac{\sigma}{2}(1-\delta)-\frac{2\sigma^2-\sigma-2}{4(k-1)}<-1, \quad & \quad
-\frac{3}{4}-\frac{\sigma}{2}(1-\delta)+\frac{\sigma}{2(k-1)}<-1.
\end{align*}
Hence we get
\begin{equation*}
\|\Lambda_h^{-\sigma}U\|_{L^2}^2+\|\Lambda_h^{-\sigma}\partial_3U\|_{L^2}^2\leq\|\Lambda_h^{-\sigma}U(0)\|_{L^2}^2+\|\Lambda_h^{-\sigma}\partial_3U(0)\|_{L^2}^2+C\varepsilon^\delta\leq E_0+C\varepsilon^\delta,
\end{equation*}
for some positive constant $C$ independent of $\varepsilon$. Taking $\varepsilon$ sufficiently small, we complete the proof of \eqref{6.0}. At last, through a standard bootstrap argument, we conclude that \eqref{3.1} holds for any $t\geq 0$. Thus the decay estimates \eqref{4.0} and \eqref{5.0} hold for any $t\geq 0$. This completes the proof of Theorem \ref{th2}.
\end{proof}

\bigskip

\noindent{\bf Acknowledgments} \\ 
This work is supported by Science Foundation of Zhejiang Sci-Tech University (ZSTU) under Grant No. 25062122-Y.

\bigskip 
 
\noindent{\bf Data Availability Statements} \\ 
Data sharing not applicable to this article as no datasets were generated or analyzed during the current study.

\bigskip

\noindent{\bf Conflict of interests} \\
The authors declare that they have no competing interests.

\bigskip

\noindent{\bf Authors' contributions} \\
The authors have made the same contribution. All authors read and approved the final manuscript.

\bibliographystyle{plain}

\end{document}